\documentclass[11pt,letterpaper,reqno,oneside]{amsart}
\usepackage[letterpaper, includehead, includefoot,top=30mm, bottom=30mm, left=30mm, right=30mm,marginparwidth=25mm, marginparsep=4mm]{geometry}
\usepackage{times}
\usepackage{comment}
\usepackage{amsfonts}
\usepackage{amsmath}
\usepackage{amssymb}
\usepackage{amsthm}
\usepackage{thmtools}
\usepackage[T1]{fontenc}
\usepackage{enumerate}
\usepackage{booktabs}
\usepackage{caption}
\usepackage{graphicx}

\usepackage{color}
\usepackage{hyperref}
\hypersetup{
    colorlinks=true,
    linkcolor=blue,
    citecolor=blue,
    filecolor=blue,
    urlcolor=blue
}

\usepackage[nameinlink,capitalize,noabbrev]{cleveref}

\usepackage{calrsfs}

\usepackage{tikz}
\usetikzlibrary{calc,intersections}
\usepackage{pgfplots}
\usepgfplotslibrary{patchplots}
\usetikzlibrary{patterns, positioning, arrows}
\pgfplotsset{compat=1.15}

\usepackage{todonotes}

\declaretheorem[name=Theorem,numberwithin=section]{theorem}
\declaretheorem[sibling=theorem,name=Corollary]{corollary}
\declaretheorem[sibling=theorem,name=Lemma]{lemma}
\declaretheorem[sibling=theorem,name=Proposition]{proposition}

\declaretheorem[sibling=theorem,name=Remark,style=remark]{remark}
\declaretheorem[sibling=theorem,name=Definition,style=definition]{definition}

\declaretheorem[sibling=theorem,name=Example,style=definition]{example}

\newtheorem{introthm}{Theorem}

\newcommand\C{\mathbf{C}}

\newcommand{\PP}[0]{\ensuremath{\mathbf{P}}}
\newcommand{\CC}[0]{\ensuremath{\mathbf{C}}}
\newcommand{\ZZ}[0]{\ensuremath{\mathbf{Z}}}

\newcommand{\tvarphi}[0]{\ensuremath{\widetilde{\varphi}}}

\newcommand{\Aut}[0]{\ensuremath{\operatorname{Aut}}}
\newcommand{\AutL}[0]{\ensuremath{\operatorname{Aut}_L}}

\newcommand{\diag}[0]{\ensuremath{\operatorname{diag}}}

\newcommand{\spar}[0]{\ensuremath{\operatorname{Spar}}}
\newcommand{\vars}[0]{\ensuremath{\operatorname{Vars}}}
\newcommand{\PGT}[0]{\ensuremath{\operatorname{PGT}}}
\newcommand{\PGP}[0]{\ensuremath{\operatorname{PGP}}}

\newcommand{\PGL}[0]{\ensuremath{\operatorname{PGL}}}
\newcommand{\GL}[0]{\ensuremath{\operatorname{GL}}}

\newcommand{\IX}{\ensuremath{I_X}}



\begin{document}

\title{Automorphism Groups of Rigid Complete Intersections}

\author[J. Duque Franco]{Jorge Duque Franco}
\address{Dirección de Investigaci\'on, Vicerrector\'ia Acad\'emica and Instituto de Matem\'aticas, Universidad de Talca,
  Casilla 721, Talca, Chile}
\email{jorge.duque@utalca.cl}

\author[A. Liendo]{Alvaro Liendo}
\address{Instituto de Matem\'aticas, Universidad de Talca,
  Casilla 721, Talca, Chile}
\email{aliendo@utalca.cl}

\author[A.~J. Palomino]{Ana Julisa Palomino}
\address{Instituto de Matem\'aticas, Universidad de Talca,
  Casilla 721, Talca, Chile}
\email{ana.palomino@utalca.cl}

\date{\today}

\thanks{{\it 2020 Mathematics Subject
    Classification}: 14J50, 14J70, 14M10, 14N05.\\
  \mbox{\hspace{11pt}}
  {\it Key words}: complete intersections, automorphism groups, Fermat hypersurfaces, Klein hypersurfaces.\\
  \mbox{\hspace{11pt}}The three authors were partially supported by Fondecyt Projects 1240101 and 13250049. The third author was also partially supported by CONICYT-PFCHA/Doctorado Nacional/folio 21240560}

\begin{abstract}
We study the automorphism groups of complete intersections of hypersurfaces of strictly increasing degrees in projective space. Under a combinatorial rigidity condition on the tuple of defining polynomials, we show that every automorphism of the complete intersection extends to an automorphism of each defining hypersurface, so that its automorphism group is the intersection of the automorphism groups of the defining hypersurfaces. We apply this principle to two natural families of complete intersections of two hypersurfaces of different degrees. For complete intersections of two Fermat hypersurfaces, we determine the automorphism group in every smooth case. For complete intersections of a Klein hypersurface with the reverse-order Klein hypersurface, we describe the automorphism group under an explicit arithmetic condition relating the two degrees, with Klein hypersurfaces of Wagstaff type as a natural source of examples.
\end{abstract}
\maketitle

\section*{Introduction}

Let $X \subseteq \PP^{n+r}$ be a complete intersection of dimension $n \geq 1$ and codimension $r \geq 1$. The study of its automorphism group $\Aut(X)$ rests on two fundamental properties: the extension of regular automorphisms to linear automorphisms of the ambient projective space (so that $\Aut(X)$ is a finite subgroup of $\PGL_{n+r+1}(\C)$), and the triviality of the group in the generic case. For smooth hypersurfaces ($r = 1$) of degree $d \geq 3$ with $(n,d) \neq (1,3),(2,4)$, these properties were classically established by Matsumura--Monsky and Chang \cite{MM64,Cha78}; see \cite{Ko19} for a modern account. In the higher-codimension setting ($r \geq 2$), Benoist \cite{Ben13} proved that smooth complete intersections of dimension at least two (with a few exceptions) possess finite automorphism groups that extend to the ambient space, while Chen, Pan, and Zhang \cite{CPZ24} recently prove that the automorphism group of a general complete intersection is trivial. Despite these generic triviality results, explicitly determining the automorphism groups for specific, highly symmetric varieties remains a central problem. While this has been extensively explored in the hypersurface case, where several recent works have studied explicit families and constrained their possible structure \cite{GL11,GL13,OY19,WY20,LZ22,Zhe22,GALM22,Ess24,LP25}, the explicit construction and description of automorphism groups for highly symmetric families of higher-codimension complete intersections remains a largely open challenge.

Two classical families with unusually large automorphism groups have attracted sustained attention in this context: the \emph{Fermat hypersurface} of degree $d$, defined by
\[
F \;=\; x_0^d + x_1^d + \cdots + x_{n+1}^d \;=\; 0,
\]
and the \emph{Klein hypersurface} of degree $d$, defined by
\[
K \;=\; x_0^{d-1} x_1 + x_1^{d-1} x_2 + \cdots + x_n^{d-1} x_{n+1} + x_{n+1}^{d-1} x_0 \;=\; 0.
\]
The automorphism group of the Fermat hypersurface is the semidirect product $(\ZZ/d\ZZ)^{n+1} \rtimes \mathfrak{S}_{n+2}$ \cite{Kon02}; in fact, Esser--Li \cite{EL25} and, independently, Yang--Yu--Zhu \cite{YYZ25} have shown that the Fermat hypersurface realizes the maximum order of $\Aut(X)$ among smooth hypersurfaces of fixed dimension and degree, with a finite list of exceptions.  The Klein case has a long history, going back to Klein's study of the quartic plane curve \cite{Klein} and continuing through Adler \cite{Adl78}, Dolgachev \cite{Dol12}, Harui \cite{Har19}, and Oguiso--Yu \cite{OY19}; aside from a handful of sporadic low-dimensional cases, the recent paper \cite{GALMVL24} establishes
\[
\Aut(V(K)) \;\cong\; (\ZZ/m\ZZ) \rtimes (\ZZ/(n+2)\ZZ),
\qquad
m \;=\; \frac{(d-1)^{n+2} - (-1)^{n+2}}{d}.
\]
The Klein hypersurface acquires further significance from \cite[Thm.~3.7]{GL13}: every smooth hypersurface of dimension $n$ and degree $d$ admitting an automorphism of prime order $p > (d-1)^n$ is isomorphic to a Klein hypersurface, $n+2$ is prime, and $p$ equals the \emph{generalized Wagstaff prime}
\[
p \;=\; \frac{(d-1)^{n+2} + 1}{d}.
\]
The case $d = 3$ recovers the classical Wagstaff primes and $d = -1$ the Mersenne primes; in both cases there are conjecturally infinitely many \cite{dubner2000primes,BLS75}.  A Klein hypersurface realizing a generalized Wagstaff prime as the order of an automorphism is said to be of \emph{Wagstaff type} \cite{GALMVL24}.

\medskip

    In this paper we introduce and study complete intersection analogues of these families.  From this point onward we adopt the following conventions: the ambient projective space is $\PP^{n+r}$, so that hypersurfaces have dimension $n+r-1$ and complete intersections of $r$ hypersurfaces have dimension $n$.  Also, unless otherwise stated, we assume $r \geq 2$.  Concretely, we work over $\C$ and let $V$ be a vector space of dimension $n+r+1$ with fixed basis $\beta = \{e_0, \ldots, e_{n+r}\}$ and dual basis $\beta^* = \{x_0, \ldots, x_{n+r}\}$.  By a \emph{complete intersection} of multidegree $(d_1, \ldots, d_r)$ in $\PP^{n+r} = \PP(V)$, we mean a closed subscheme $X = V(F_1) \cap \cdots \cap V(F_r)$ of dimension $n$, defined by a regular sequence of homogeneous polynomials $F_1, \ldots, F_r$ in $S(V^*)$ where each $F_j$ has degree $d_j$.  By a \emph{smooth complete intersection} of multidegree $(d_1, \ldots, d_r)$ in $\PP^{n+r} = \PP(V)$ we mean a smooth subvariety $X = V(F_1) \cap \cdots \cap V(F_r)$, where $F_j$ is a homogeneous polynomial of degree $d_j$.  Such a variety has dimension $n$.

Our starting point is the elementary observation that, when the degrees are strictly increasing $d_1 < d_2 < \cdots < d_r$, the graded component $(\IX)_{d_1}$ of the homogeneous ideal of $X$ is one-dimensional, so every automorphism of $X$ preserves $V(F_1)$ up to a scalar (\cref{lem:F1-preserved}).  To upgrade this to $\Aut(X)$ we introduce a combinatorial condition on the tuple $(F_1, \ldots, F_r)$ which we call \emph{rigidity} (\cref{def:rigidity}): for each $j \geq 2$, no nonzero element of $\sum_{i<j} F_i \cdot S^{d_j-d_i}(V^*)$ has all its monomials in the symmetric-group orbit $\mathcal{O}(F_j)$ of the support of $F_j$.  Combined with the results of \cite{GALMVL24} on the shape of automorphisms of $V(F_1)$, which constrain automorphisms via combinatorial conditions on the monomial support of $F_1$, namely the \emph{sparsity} $\spar(F_1)$ and a partial order $\leq_{F_1}$ on the variables (\cref{def:spar-order}), rigidity yields our first main result.

\begin{introthm}
\label{introthm:CI-main}
Let $X = V(F_1) \cap \cdots \cap V(F_r) \subseteq \PP^{n+r}$ be a complete intersection of dimension $n \geq 1$ and multidegree $(d_1, \ldots, d_r)$ with $3 \leq d_1 < d_2 < \cdots < d_r$ and $(\dim V(F_1), d_1) \neq (2, 4)$.  Assume either $n \geq 2$, or $n = 1$, $r = 2$ and $X$ smooth.  Suppose $V(F_1)$ is smooth, $\spar(F_1) > 4$, the poset $(\beta^*, \leq_{F_1})$ is trivial, and the tuple $(F_1, \ldots, F_r)$ is rigid. Then
\[
\Aut(X) \;=\; \bigcap_{j=1}^{r} \Aut(V(F_j))
\]
as subgroups of $\PGL(V)$. Moreover, if each $V(F_j)$ is smooth and $d_j \neq 4$ whenever $n = 1$, then every automorphism of $X$ of prime power order $q = p^s$ with $p \nmid d_1 \cdots d_r$ satisfies $(1 - d_j)^{\ell_j} \equiv 1 \pmod{q}$ for some $\ell_j \in \{1, \ldots, n+r+1\}$, for every $j \in \{1, \ldots, r\}$.
\end{introthm}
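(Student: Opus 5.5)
The plan is to show that every automorphism of $X$ is linear, then that it preserves each $V(F_j)$ in turn, and finally to read off the order condition from the hypersurface case.

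\emph{Step 1: linearity.} First I would show that every automorphism of $X$ is induced by an element of $\PGL(V)$ that preserves $I_X$. For $n \geq 2$ this is the extension theorem of Benoist \cite{Ben13}. For $n=1$, $r=2$ with $X$ smooth, one uses that $X$ is a smooth canonically (or subcanonically) embedded curve. Indeed, $\omega_X=\mathcal{O}_X(d_1+d_2-3)$ and $X$ is projectively normal, so the embedding is given by a complete linear system attached to a power-compatible piece of $\omega_X$, and this piece is preserved by every automorphism. I would take this as the standard input, since it is presumably recorded earlier in the paper.

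\emph{Step 2: $F_1$ and $F_j$ are preserved.} Let $g\in\Aut(X)\subseteq\PGL(V)$. By \cref{lem:F1-preserved}, $g^*F_1=\lambda F_1$, so $g\in\Aut(V(F_1))$. Since $V(F_1)$ is smooth, $d_1\ge3$, $(\dim V(F_1),d_1)\neq(2,4)$, $\spar(F_1)>4$ and $\leq_{F_1}$ is trivial, the results of \cite{GALMVL24} show that $g$ is a generalized permutation matrix: a permutation of the variables composed with a diagonal torus element.

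Next I would argue by induction on $j$. Suppose $g$ preserves $V(F_i)$ for all $i<j$. Since $g$ preserves $(I_X)_{d_j}$, we have $g^*F_j=\mu F_j+\sum_{i<j}F_iH_i$. A generalized permutation matrix sends each monomial to a scalar multiple of a permuted monomial, so $g^*F_j$ has support in $\mathcal{O}(F_j)$. Hence $g^*F_j-\mu F_j$ lies in $\sum_{i<j}F_i\, S^{d_j-d_i}(V^*)$ and has all its monomials in $\mathcal{O}(F_j)$. Rigidity forces this difference to vanish, so $g^*F_j=\mu F_j$ and $g\in\Aut(V(F_j))$.

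One point needs care: $\mu$ is not known to be nonzero a priori. The coefficient of $F_j$ modulo the lower-degree part is well defined because $F_j\notin(F_1,\dots,F_{j-1})$. Since $g^*$ is invertible on $(I_X)_{d_j}$ and preserves the subspace generated by $F_1,\dots,F_{j-1}$ (by the induction hypothesis), it induces an invertible map on the quotient $(I_X)_{d_j}/(\text{lower})$, which is one-dimensional because $d_j>d_{j-1}$. Hence $\mu\neq0$.

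This gives $\Aut(X)\subseteq\bigcap_j\Aut(V(F_j))$. The reverse inclusion is trivial: an element of the intersection preserves each $V(F_j)$, hence preserves $X$.

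\emph{Step 3: the order condition.} By Step 2, an automorphism of $X$ of order $q$ lies in $\Aut(V(F_j))$ for each $j$. I would then apply the known result for smooth hypersurfaces \cite{GL13,Zhe22}: if $V(F_j)$ is smooth of degree $d_j$ and has an automorphism of order $q=p^s$ with $p\nmid d_j$, then $(1-d_j)^{\ell}\equiv1\pmod q$ for some $\ell\le N+1$, where $N=n+r$ is the number of homogeneous coordinates minus one. Here the number of coordinates is $n+r+1$, which gives $\ell_j\in\{1,\dots,n+r+1\}$. The exclusions in this result are $d_j\neq 4$ when $\dim V(F_j)=2$, that is when $n=1$, $r=2$, together with $d_j\ge3$; these match the hypotheses of the statement.

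I expect the main obstacle to be justifying Step 1 when $n=1$, and checking that the hypotheses transferred to the result of \cite{GALMVL24} are exactly those listed in the statement.
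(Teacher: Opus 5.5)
Your Steps 2 and 3 are essentially the paper's argument. You use \cref{lem:F1-preserved}, then \cref{thm:galm22-shape} on $V(F_1)$ to get generalized permutation matrices (this is \cref{thm:CI-shape}), then the rigidity induction exactly as in \cref{thm:CI-main}, and finally the hypersurface order criterion applied to each $V(F_j)$, as in \cref{prop:order}. Your quotient argument for $\mu\neq 0$ is correct but not needed: once $g^*F_j=\mu F_j$, injectivity of $g^*$ already gives $\mu\neq 0$.

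The gap is in Step 1, which is not a formality here. There are two problems.

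\emph{Singular $X$ with $n\ge 2$.} Benoist's theorem \cite{Ben13} covers only \emph{smooth} complete intersections. For $n\ge 2$ the statement allows $X$ to be singular, since only $V(F_1)$ (or each $V(F_j)$) is assumed smooth. The paper argues differently:
\begin{itemize}
\item for $n\ge 3$ it uses Grothendieck--Lefschetz, $\operatorname{Pic}(X)=\ZZ\cdot\mathcal{O}_X(1)$, which holds for any complete intersection \cite{SGA2};
\item for $n=2$ it combines torsion-freeness of $\operatorname{Pic}(X)$ \cite{CS24} with the intrinsic dualizing sheaf $\omega_X^\circ\cong\mathcal{O}_X(k)$, where $k=\sum d_i-r-3>0$.
\end{itemize}

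\emph{The curve case.} Your argument for $n=1$ does not work. For a complete intersection of type $(d_1,d_2)$ in $\PP^3$ one has $\omega_X\cong\mathcal{O}_X(d_1+d_2-4)$, not $\mathcal{O}_X(d_1+d_2-3)$. More seriously, invariance of $\omega_X$ under $\varphi$ only shows that $\varphi^*\mathcal{O}_X(1)\otimes\mathcal{O}_X(-1)$ is a $k$-torsion point of $\operatorname{Jac}(X)$. The Jacobian has plenty of nontrivial torsion, and the canonical ring is insensitive to such a twist. So the claim that the relevant piece of $\omega_X$ pins down $\mathcal{O}_X(1)$ is unjustified; the paper's remark after \cref{prop:AutL} points out exactly this obstruction.

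The missing ingredient is a uniqueness result for linear series. The bundle $\varphi^*\mathcal{O}_X(1)$ defines a complete $g^3_{d_1d_2}$. By Ciliberto--Lazarsfeld \cite{CL84}:
\begin{itemize}
\item if $d_1\le 4$, the hyperplane series $|H|$ is the unique $g^3_{d_1d_2}$;
\item if $d_1\ge 5$, every such series $W$ satisfies $W+E\subseteq|H|$ with $E\ge 0$, and comparing degrees forces $E=0$, hence $W=|H|$.
\end{itemize}
This is precisely \cref{prop:AutL}. Citing that proposition closes your Step 1, but the justification you sketched would not.
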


We apply \cref{introthm:CI-main} to two natural families in codimension two. In both families considered below, the defining hypersurfaces are smooth and hence irreducible. Since their degrees are different, their defining polynomials are nonassociate and therefore coprime; consequently, they form a regular sequence, and their intersection is a complete intersection. The first family consists of complete intersections of two Fermat hypersurfaces in $\PP^{n+2}$. Rigidity of the pair $(F_1, F_2)$ is automatic, and \cref{introthm:CI-main} applies.

\begin{introthm}
\label{introthm:fermat}
Let $X = V(F_1) \cap V(F_2) \subseteq \PP^{n+2}$ be a complete intersection of two Fermat hypersurfaces of degrees $3 \leq d_1 < d_2$ with $n \geq 1$ and $(n, d_j) \neq (1, 4)$ for $j = 1, 2$. Assume either $n \geq 2$, or $n = 1$ and $X$ is smooth. Then$$\Aut(X) \;\cong\; (\ZZ/\gcd(d_1, d_2)\ZZ)^{n+2} \rtimes \mathfrak{S}_{n+3},$$where $\mathfrak{S}_{n+3}$ acts by permutation of the variables and the abelian factor by diagonal multiplication by appropriate roots of unity.
\end{introthm}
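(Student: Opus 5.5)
The plan is to reduce to \cref{introthm:CI-main} with $r=2$, and then compute $\Aut(V(F_1))\cap\Aut(V(F_2))$ directly. Here $F_1=\sum x_i^{d_1}$ and $F_2=\sum x_i^{d_2}$ in $n+3$ variables.

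First I check the hypotheses of \cref{introthm:CI-main}. $V(F_1)$ is smooth of dimension $n+1$. The excluded case $(\dim V(F_1),d_1)=(2,4)$ is $(n,d_1)=(1,4)$, which is excluded by assumption. The monomial support of a Fermat polynomial consists of the $n+3$ pure powers, so I expect $\spar(F_1)$ to be large; it should exceed $4$ by the definition of sparsity from \cite{GALMVL24}, which I would verify from \cref{def:spar-order}. Every variable appears only in its own pure power, so no relation $x_i\le_{F_1}x_j$ with $i\ne j$ can hold, and the poset is trivial.

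For rigidity, $\mathcal O(F_2)$ is the set $\{x_0^{d_2},\dots,x_{n+2}^{d_2}\}$. I must show that no nonzero $F_1G$ with $G\in S^{d_2-d_1}(V^*)$ has support contained in the pure $d_2$-th powers. Suppose $F_1G=\sum c_ix_i^{d_2}$. Setting $x_j=0$ for all $j\ne i$ gives $x_i^{d_1}G(x_ie_i)=c_ix_i^{d_2}$. Reducing modulo $x_i$ is cleaner: $F_1$ becomes the Fermat polynomial in the remaining variables, and the right side becomes a sum of pure powers in those variables.

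The direct route is a degree comparison on a monomial $x_0^{a}x_1^{b}\cdots$ of $G$ that is maximal in a suitable order. Let $m$ be the lex-largest monomial of $G$ with $x_0$ exponent maximal. Then $x_1^{d_1}m$ appears in $F_1G$ uncancelled: any other product $x_k^{d_1}m'$ producing it would force $m'$ to be larger than $m$ in that order. Since $x_1^{d_1}m$ is not a pure power unless $m=x_1^{d_2-d_1}$, I get a contradiction whenever $m$ is not such a power, and I would run the same argument with a different index. The conclusion is $G=0$. This step needs a careful choice of monomial order, and I expect it to be the main obstacle, though it is routine.

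Hence $\Aut(X)=\Aut(V(F_1))\cap\Aut(V(F_2))$. By \cite{Kon02}, each group is $(\ZZ/d_j\ZZ)^{n+2}\rtimes\mathfrak S_{n+3}$, realized in $\PGL$ as the monomial matrices with $d_j$-th root of unity entries modulo scalars. An element lying in both groups is a permutation matrix times $\diag(\lambda_i)$. In $\PGL$, membership in $\Aut(V(F_1))$ means $\lambda_i=c\,\zeta_i$ with $\zeta_i^{d_1}=1$. Membership in $\Aut(V(F_2))$ similarly means $\lambda_i=c'\,\xi_i$ with $\xi_i^{d_2}=1$. After normalizing $\lambda_0=1$, we have $\lambda_i^{d_1}=\lambda_i^{d_2}=1$, so $\lambda_i^{\gcd(d_1,d_2)}=1$. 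Therefore the intersection is the group of monomial matrices with $\gcd(d_1,d_2)$-th root of unity entries, modulo scalars. This is $(\ZZ/\gcd(d_1,d_2)\ZZ)^{n+2}\rtimes\mathfrak S_{n+3}$, with the stated actions.
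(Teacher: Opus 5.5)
Your overall plan matches the paper's proof. You check the hypotheses of \cref{thm:CI-main} (for sparsity, just state $\spar(F_1)=2d_1\geq 6$, since distinct pure powers $d_1e_i,d_1e_j$ are at $\ell^1$-distance $2d_1$), prove rigidity of $(F_1,F_2)$, and then intersect the two Fermat groups. Your final step, normalizing $\lambda_0=1$ to get $\lambda_i^{d_1}=\lambda_i^{d_2}=1$ and hence $\lambda_i^{\gcd(d_1,d_2)}=1$, is correct and is exactly the paper's computation.

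The gap is in the rigidity step, which is where the real content lies. Your key claim is that $x_1^{d_1}m$ occurs uncancelled in $F_1G$ when $m$ is the lex-largest monomial of $G$. This is false. The competing product $x_0^{d_1}\cdot m'$ with $m'=m\,x_1^{d_1}/x_0^{d_1}$ has $m'$ lex-\emph{smaller} than $m$, because its $x_0$-exponent drops. So it can cancel whenever $x_0^{d_1}\mid m$. For example, take $d_2=2d_1$ and $G=x_0^{d_1}-x_1^{d_1}$: the coefficient of $x_0^{d_1}x_1^{d_1}$ in $F_1G$ is $1-1=0$.

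If you multiply by $x_0^{d_1}$ instead, the lex argument does work. But it only shows that the leading monomial of $G$ is $x_0^{d_2-d_1}$, which is no contradiction. Your step ``run the same argument with a different index'' is unspecified, and that residual case is precisely the hard part.

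More fundamentally, your sketch never uses the number of variables, yet rigidity fails with only two variables. For every integer $q\geq 2$, $x_0^{d_1}+x_1^{d_1}$ divides $x_0^{qd_1}-(-1)^qx_1^{qd_1}$, for instance $(x_0^{d_1}+x_1^{d_1})(x_0^{d_1}-x_1^{d_1})=x_0^{2d_1}-x_1^{2d_1}$. So no leading-monomial argument of this kind can be completed as it stands; a correct proof must exploit a third variable.

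The paper does this as follows. Write $F_1=x_{n+2}^{d_1}+h$, which is monic in $x_{n+2}$, and write $d_2=qd_1+r$ with $0\le r<d_1$. Reduce $\sum a_ix_i^{d_2}$ modulo $F_1$ using $x_{n+2}^{d_2}\equiv(-h)^qx_{n+2}^r$, and compare coefficients in $x_{n+2}$. In the case $r=0$, the mixed monomial $x_0^{d_1(q-1)}x_1^{d_1}$ of $(-h)^q$ forces $a_{n+2}=0$. This monomial is available because $h$ has at least two terms.

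An equivalent repair is to split the monomials of $G$ by exponent residues mod $d_1$. Then, with $y_i=x_i^{d_1}$, everything reduces to showing that $\sum_i y_i$ does not divide any nonzero $\sum_i c_iy_i^q$ (for $q\geq 2$, with at least three $y_i$), nor any nonzero monomial. This follows by substituting $y_0=-\sum_{i\geq1}y_i$.
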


The smoothness of such complete intersections depends on the arithmetic of their degrees. We establish necessary and sufficient conditions for a Fermat complete intersection to be smooth in \cref{prop:fermat-CI-smooth}.

The Klein analogue is more delicate.  The cyclic symmetry of the Klein polynomial $K$ admits a natural variant of degree $d'$ obtained by reversing the cyclic order of the variables, the \emph{reverse-order Klein polynomial} 
\[
K' \;=\; x_1^{d'-1} x_0 + x_2^{d'-1} x_1 + \cdots + x_{n+2}^{d'-1} x_{n+1} + x_0^{d'-1} x_{n+2}.
\]
By a \emph{Klein complete intersection} of multidegree $(d_1, d_2)$ we mean $X = V(K_1) \cap V(K_2) \subseteq \PP^{n+2}$, where $K_1$ is the Klein polynomial of degree $d_1$ and $K_2$ the reverse-order Klein polynomial of degree $d_2$, with $d_1 < d_2$.  The interaction between $V(K_1)$ and $V(K_2)$ connects this family directly to Klein hypersurfaces of Wagstaff type.  We show in \cref{prop:reverse} that, whenever $d$ and $d'$ satisfy
\[
(d-1)(d'-1) \equiv 1 \pmod{m},
\qquad
m \;=\; \frac{(d-1)^{n+3} - (-1)^{n+3}}{d},
\]
the diagonal automorphism of order $m$ of the Klein hypersurface $V(K)$ also preserves $V(K')$. This leads to our last result.  We also record some necessary arithmetic conditions for the smoothness of $X$ in \cref{cor:smooth-necessary}.

\begin{introthm}\label{introthm:klein}Let $X = V(K_1) \cap V(K_2) \subseteq \PP^{n+2}$ be a Klein complete intersection of dimension $n \geq 2$ and multidegree $(d_1,d_2)$ with $3 \leq d_1 < d_2$ and $(n, d_1) \neq (2, 3)$. If$$(d_1 - 1)(d_2 - 1) \equiv 1 \pmod{m},
\qquad
m \;=\; \frac{(d_1 - 1)^{n+3} - (-1)^{n+3}}{d_1},$$then$$\Aut(X) \;\cong\; \Aut(V(K_1)) \;\cong\; (\ZZ/m\ZZ) \rtimes (\ZZ/(n+3)\ZZ).$$
\end{introthm}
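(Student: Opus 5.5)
Our plan is to reduce to \cref{introthm:CI-main} with $r=2$, $F_1=K_1$, $F_2=K_2$, and then compute $\Aut(V(K_1))\cap\Aut(V(K_2))$ inside $\PGL(V)$. The hypotheses of \cref{introthm:CI-main} are checked as follows. $V(K_1)$ is a Klein hypersurface of dimension $n+1\ge 3$, so it is smooth and $(\dim V(K_1),d_1)\neq(2,4)$ automatically. For the Klein polynomial one has $\spar(K_1)>4$ as soon as the number of variables $n+3\geq 5$. The poset $(\beta^*,\leq_{K_1})$ is trivial, as in \cite{GALMVL24}, where these same two properties are used to obtain the Klein hypersurface result. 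The excluded case $(n,d_1)=(2,3)$ is exactly the sporadic case where this description fails (cubic threefolds in $\PP^4$), which matches the exclusion of sporadic cases there.

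The substantive hypothesis is rigidity of $(K_1,K_2)$. The orbit $\mathcal{O}(K_2)$ consists of monomials $x_i^{d_2-1}x_j$ with $i\ne j$. Suppose $G=K_1\cdot H$ with $H\in S^{d_2-d_1}(V^*)$ nonzero. We would take a monomial order, say lex, and compare leading terms. The leading monomial of $G$ is $\mathrm{LM}(K_1)\mathrm{LM}(H)$. This product would have to be of the form $x_i^{d_2-1}x_j$, which forces $\mathrm{LM}(H)$ to be a pure power $x_i^{d_2-d_1}$ or $x_i^{d_2-d_1-1}x_j$.

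We then argue that the other products, of $x_k^{d_1-1}x_{k+1}$ with the extreme terms of $H$, produce monomials with two variables to exponent at least $2$, or with three distinct variables, that cannot cancel. Concretely, choose the orders on $H$ from both ends (lex and reverse lex, or by total exponent of a chosen variable). Some surviving monomial of $G$ should then lie outside $\mathcal{O}(K_2)$. Since $d_1\geq3$, every term of $K_1$ has an exponent $\ge2$ on one variable, while terms in $\mathcal{O}(K_2)$ have exactly one exponent $>1$ (when $d_2\ge3$). This makes the bookkeeping feasible: $x_k^{d_1-1}x_{k+1}\cdot M$ must be of shape $x_i^{d_2-1}x_j$, which forces $M$ to be a power of $x_k$ times at most one of $x_{k+1}$. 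Applying this to all $n+3$ terms of $K_1$ simultaneously with one fixed monomial $M$ of $H$ yields a contradiction, since $M$ cannot be simultaneously a near-power of every $x_k$. The remaining task is to ensure that cancellations cannot rescue it; the extreme-monomial argument handles this. We expect this rigidity verification to be the main obstacle.

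Given rigidity, \cref{introthm:CI-main} yields $\Aut(X)=\Aut(V(K_1))\cap\Aut(V(K_2))$. By \cite{GALMVL24}, $\Aut(V(K_1))\cong(\ZZ/m\ZZ)\rtimes(\ZZ/(n+3)\ZZ)$. It is generated by the diagonal automorphism $\sigma=\diag(\zeta^{a_0},\dots,\zeta^{a_{n+2}})$ of order $m$, with $a_{i+1}\equiv(1-d_1)a_i \pmod m$, together with the cyclic shift $x_i\mapsto x_{i+1}$. The cyclic shift visibly preserves $K_2$, since the reverse-order cyclic structure is invariant under the same rotation. By \cref{prop:reverse}, the congruence $(d_1-1)(d_2-1)\equiv1\pmod m$ guarantees that $\sigma$ preserves $V(K_2)$. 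The check is that $x_{i+1}^{d_2-1}x_i$ scales by $\zeta^{(d_2-1)a_{i+1}+a_i}$, and $(d_2-1)(1-d_1)a_i+a_i\equiv0$. Hence the whole group $\Aut(V(K_1))$ lies in $\Aut(V(K_2))$, the intersection equals $\Aut(V(K_1))$, and the stated isomorphism follows.
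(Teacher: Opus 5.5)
\textbf{There is a genuine gap: your route cannot reach the case $d_1=3$, which the statement covers.}

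You reduce to \cref{introthm:CI-main}, which requires $\spar(K_1)>4$. Your claim that this holds once $n+3\geq 5$ is false. The sparsity of the Klein polynomial comes from adjacent monomials $x_k^{d_1-1}x_{k+1}$ and $x_{k+1}^{d_1-1}x_{k+2}$, whose distance is $\|(d_1-1)e_k+e_{k+1}-(d_1-1)e_{k+1}-e_{k+2}\|_1=(d_1-1)+(d_1-2)+1=2(d_1-1)$. This does not depend on the number of variables. So for $d_1=3$ you get $\spar(K_1)=4$, and neither \cref{introthm:CI-main} nor the underlying shape theorem applies. The statement only excludes $(n,d_1)=(2,3)$, so $d_1=3$ with $n\geq 3$ is covered. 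For example, $(n,d_1,d_2)=(4,3,23)$ has $m=(2^7+1)/3=43$ and $2\cdot 22\equiv 1 \pmod{43}$. This is a Wagstaff-type case your argument does not handle. For $d_1\geq 4$ your route does work, once rigidity is actually proved.

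\textbf{What you are missing is that the full intersection formula is not needed.} So sparsity, the poset and rigidity are not needed either. The argument goes in three steps:
\begin{enumerate}[\rm(i)]
\item Since $n\geq 2$ and $3\leq d_1<d_2$, every automorphism of $X$ is linear by \cref{prop:AutL}.
\item Since $d_1<d_2$, the graded piece $(\IX)_{d_1}=\CC\cdot K_1$ is one-dimensional, so every automorphism preserves $V(K_1)$ by \cref{lem:F1-preserved}. Hence $\Aut(X)\subseteq\Aut(V(K_1))=G$.
\item Your last paragraph, which is \cref{prop:reverse}, gives $G\subseteq\Aut(V(K_2))$, hence $G\subseteq\Aut(X)$.
\end{enumerate}
The two inclusions give $\Aut(X)=G$ for every $d_1\geq 3$. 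This is exactly the paper's proof.

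\textbf{Your rigidity sketch is also incomplete as written.} With lex order, comparing leading terms forces $\mathrm{LM}(H)=x_0^{d_2-d_1}$; the other shape you list does not occur. In that case the leading product does lie in $\mathcal{O}(K_2)$. You would then have to exhibit a specific monomial outside $\mathcal{O}(K_2)$, such as $x_0^{d_2-d_1}x_2^{d_1-1}x_3$, and prove that it receives only one contribution in $K_1H$. Saying the extreme-monomial argument handles cancellation asserts this step rather than proving it.
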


\medskip

The paper is organized as follows.   \Cref{sec:prelim} fixes notation and recalls the differential method \cite{OY19,GALMVL24} and the order criterion \cite{GL13,GALM22}.  \Cref{sec:aut-CI} develops the main theory of this paper and proves \cref{introthm:CI-main}.  \Cref{sec:fermat} treats Fermat complete intersections and proves \cref{introthm:fermat}.  \Cref{sec:klein} treats Klein complete intersections and proves \cref{introthm:klein}.

\section{Preliminaries}
\label{sec:prelim}

In this section we fix notation and recall some results on automorphisms of smooth hypersurfaces that we will need throughout the paper. 

We work over the field of complex numbers $\C$, and all varieties are projective unless otherwise stated. Throughout the paper, we fix integers $n \geq 1$ and $r \geq 2$, and let $V$ denote a complex vector space of dimension $n+r+1$ with a fixed basis $\beta = \{e_0, \dots, e_{n+r}\}$ and corresponding dual basis $\beta^* = \{x_0, \dots, x_{n+r}\}$ of $V^*$.  We write $\PP(V) = \PP^{n+r}$ for the corresponding projective space.  The choice of $\beta$ induces an isomorphism between the symmetric algebra $S(V^*)$ and the polynomial ring $\CC[x_0, \dots, x_{n+r}]$.  With this convention, hypersurfaces $V(F) \subset \PP^{n+r}$ have dimension $n+r-1$, and complete intersections $V(F_1) \cap \cdots \cap V(F_r) \subset \PP^{n+r}$ of $r$ hypersurfaces have dimension $n$.  The case $r = 1$ recovers the convention of a single hypersurface in $\PP^{n+1}$ of dimension $n$.

The group of automorphisms of $\PP^{n+r} = \PP(V)$ is the projective linear group $\PGL(V)$, obtained as the quotient of $\GL(V)$ by its center; we write $\pi \colon \GL(V) \to \PGL(V)$ for the canonical projection.  For an element $\varphi \in \PGL(V)$, any preimage $\tvarphi \in \GL(V)$ under $\pi$ is called a \emph{lift} of $\varphi$; lifts are unique up to scalar.  For a homogeneous polynomial $F \in S(V^*)$ and a lift $\tvarphi$, the action $\tvarphi^*(F) = F \circ \tvarphi$ is well defined, while the condition $\tvarphi^*(F) = a F$ for some $a \in \CC^*$ depends only on $\varphi$.

For a projective variety $X \subset \PP^{n+r}$ we denote by $\Aut(X)$ its group of automorphisms, and by $\AutL(X) \subseteq \Aut(X)$ the subgroup of those automorphisms that extend to an automorphism of the ambient $\PP^{n+r}$. By a classical result of Matsumura and Monsky \cite{MM64}, $\Aut(X) = \AutL(X)$ whenever $X$ is a smooth hypersurface of dimension at least one and degree $d \geq 3$, with the exception of smooth plane cubic curves and smooth quartic surfaces in $\PP^3$. For complete intersections, the situation is analogous. Benoist \cite[Th\'eor\`eme~3.1]{Ben13} established this property for smooth complete intersections of dimension $n \geq 2$, with the exceptions of smooth quadrics, smooth intersections of two quadrics, and smooth intersections of a quadric and a cubic in $\PP^4$. The smoothness assumption is not needed for the extension itself: as noted by Kontogeorgis \cite[\S 2]{Kon02}, the extension behavior holds for geometric complete intersections of higher dimension without requiring smoothness. For smooth curves in $\PP^3$ ($n = 1$, $r = 2$), the property holds as a consequence of the results of Ciliberto and Lazarsfeld on complete linear series \cite[Corollary~2.5 and Theorem~2.6]{CL84}. The precise version needed in this paper is stated in the following proposition, and a proof is provided for the sake of completeness.

\begin{proposition}
\label{prop:AutL}
Let $X = V(F_1) \cap \cdots \cap V(F_r) \subset \PP^{n+r}$ be a complete intersection
of dimension $n \geq 1$ and codimension $r \geq 2$, with multidegree
$(d_1, \ldots, d_r)$ satisfying $3 \leq d_1 < d_2 < \cdots < d_r$. Assume either
$n \geq 2$, or $n = 1$, $r = 2$ and $X$ smooth. Then
\[
\Aut(X) = \AutL(X).
\]
\end{proposition}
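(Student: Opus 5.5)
The plan is to show that every automorphism of $X$ preserves the hyperplane class, i.e.\ $\varphi^*\mathcal{O}_X(1)\cong\mathcal{O}_X(1)$, and that the restriction map $H^0(\PP^{n+r},\mathcal{O}(1))\to H^0(X,\mathcal{O}_X(1))$ is an isomorphism. Then $\varphi$ acts linearly on $H^0(X,\mathcal{O}_X(1))\cong V^*$. Since $X$ spans $\PP^{n+r}$ (it is embedded by this complete linear system), $\varphi$ is the restriction of the induced element of $\PGL(V)$. This gives $\Aut(X)\subseteq\AutL(X)$, and the reverse inclusion is trivial.

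\emph{Linear normality.} Since $X$ is a complete intersection, the Koszul resolution of $\mathcal{O}_X$ by sums of $\mathcal{O}(-\sum_{i\in I}d_i)$ gives $H^0(\mathcal{O}_{\PP}(k))\to H^0(\mathcal{O}_X(k))$ surjective for all $k$, together with $H^i(\mathcal{O}_X(k))=0$ for $0<i<n$. Because every $d_i\geq 3>1$, no $F_i$ is linear, so the restriction in degree $1$ is also injective. Hence $H^0(X,\mathcal{O}_X(1))=V^*$, and $X$ is projectively normal and nondegenerate. This works without smoothness, since $X$ is arithmetically Cohen--Macaulay.

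\emph{Case $n\geq 2$.} Here I would use Grothendieck--Lefschetz for complete intersections. For $n\geq 3$, $\operatorname{Pic}(X)=\ZZ\cdot\mathcal{O}_X(1)$ (Grothendieck's theorem, valid for local complete intersections of dimension $\geq 3$). Since $\mathcal{O}_X(1)$ is the ample generator, it is preserved by every automorphism. Alternatively, and uniformly for $n\geq 2$ including singular $X$, one can use the dualizing sheaf. Since $X$ is Gorenstein, adjunction gives $\omega_X=\mathcal{O}_X(\sum d_i-n-r-1)$, and $\omega_X$ is intrinsic, so $\varphi^*\omega_X\cong\omega_X$. If $\sum d_i\neq n+r+1$, then $\varphi^*\mathcal{O}_X(1)\otimes\mathcal{O}_X(-1)$ is torsion in $\operatorname{Pic}(X)$. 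Torsion-freeness of $\operatorname{Pic}$ would then finish the argument. For $n\geq 2$ I would argue it through the intrinsic graded ring: use $H^1(\mathcal{O}_X)=0$ to get discreteness of $\operatorname{Pic}$, and for $n=2$ use $\pi_1$-triviality from Lefschetz. This is essentially Kontogeorgis's remark.

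\emph{Fano case and the main obstacle.} The main obstacle is the case where $\omega_X$ does not pin down $\mathcal{O}_X(1)$, namely the Fano/Calabi--Yau borderline with $n=2$. With $d_i\geq 3$ strictly increasing, $\sum d_i\geq 3+4=7$, so in the anticanonical case $\mathcal{O}_X(1)$ is a rational multiple of $-K_X$. For $n=2$ I would invoke Benoist's theorem (Théorème~3.1) directly in the smooth case. His listed exceptions have $d_1\leq 2$ and are excluded here. For the singular case I would rely on the Kontogeorgis remark cited above.

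\emph{Case $n=1$, $r=2$, $X$ smooth.} Here $X$ is a smooth curve of degree $d_1d_2$ with $\omega_X=\mathcal{O}_X(d_1+d_2-4)$ and $d_1+d_2-4\geq 3$. The pullback $L=\varphi^*\mathcal{O}_X(1)$ is a very ample $g^3_{d_1d_2}$ with $L^{\otimes(d_1+d_2-4)}\cong\omega_X\cong\mathcal{O}_X(d_1+d_2-4)$. By Ciliberto--Lazarsfeld [CL84, Cor.~2.5, Thm.~2.6], a complete intersection curve of type $(d_1,d_2)$ with $d_1\geq 3$ carries a unique $g^3_{d_1d_2}$, namely $\mathcal{O}_X(1)$. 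Hence $L\cong\mathcal{O}_X(1)$, and the linear-normality step completes the proof.
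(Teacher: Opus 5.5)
Your proposal follows the paper's proof essentially step for step: Koszul linear normality, Grothendieck--Lefschetz for $n\ge 3$, adjunction plus torsion-freeness of $\operatorname{Pic}(X)$ for $n=2$ (which you justify via $H^1(\mathcal{O}_X)=0$ and Lefschetz simple connectivity, where the paper cites \cite{CS24}), and Ciliberto--Lazarsfeld for space curves. The only flaw is your ``Fano case'' paragraph, which guards against a case that never occurs: for $n=2$ the adjunction exponent is $k=\sum d_i-r-3\ge 2$, so the canonical-class argument always applies there and the fallback to Benoist and Kontogeorgis is unnecessary.
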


\begin{proof}
Let $\varphi \in \Aut(X)$ be an automorphism. Since $X \subset \PP^{n+r}$ is a complete intersection, using the Koszul complex we obtain that the natural restriction map of global sections is an isomorphism
\begin{equation}
\label{eq:rest}
H^0(\PP^{n+r}, \mathcal{O}_{\PP^{n+r}}(1)) \xrightarrow{\sim} H^0(X, \mathcal{O}_X(1)).
\end{equation}
Via this isomorphism, the existence of a projective lift of $\varphi$ in $\PGL(V)$ reduces to showing that $\varphi^*\mathcal{O}_X(1) \cong \mathcal{O}_X(1)$.

Assume first $n \geq 3$. By the Grothendieck-Lefschetz theorem, $\operatorname{Pic}(X) \cong \ZZ \cdot \mathcal{O}_X(1)$ \cite[Exposé XII, Corollaire 3.7]{SGA2}, so every automorphism preserves the unique ample generator, and $\varphi^*\mathcal{O}_X(1) \cong \mathcal{O}_X(1)$. 

Now assume that $n = 2$. By \cite[Corollary~7.2.3]{CS24}, $\operatorname{Pic}(X)$ is torsion-free. By the adjunction formula, the dualizing sheaf is invertible and is given by $\omega^\circ_X \cong \mathcal{O}_X(k)$ with $k = \sum_{i=1}^r d_i - (r+3)$ \cite[~III Thm 7.11]{Har77}. Since $r \geq 2$ and $d_1 \geq 3$, $d_2 \geq 4$, we have $k > 0$. Every automorphism preserves $\omega^\circ_X$, so $\varphi^*\mathcal{O}_X(k) \cong \mathcal{O}_X(k)$, and torsion-freeness of $\operatorname{Pic}(X)$ then forces $\varphi^*\mathcal{O}_X(1) \cong \mathcal{O}_X(1)$.

It remains to treat the case $n = 1$ and $r = 2$, so that $X$ is a smooth complete intersection curve in $\PP^3$ of multidegree $(d_1, d_2)$ with $3 \leq d_1 < d_2$. Let $d = d_1 d_2$ be the degree of the curve. Recall that a linear series of projective dimension $3$ and degree $d$ on $X$ is classically denoted $g^3_d$. Let $|H|$ be the complete linear series of hyperplane sections associated to $\mathcal{O}_X(1)$, which constitutes a $g^3_d$. For any $\varphi \in \Aut(X)$, the pullback $\varphi^*\mathcal{O}_X(1)$ defines a linear series $W$ with the same numerical invariants as $|H|$.

If $3 \leq d_1 \leq 4$, then \cite[Corollary~2.5]{CL84} guarantees that $|H|$ is the unique $g^3_d$ on $X$, so $W = |H|$. If $d_1 \geq 5$, then \cite[Theorem~2.6]{CL84} gives $W \leq |H|$, meaning there exists an effective divisor $E \geq 0$ on $X$ such that $W + E \subseteq |H|$. Taking degrees yields \[\deg W + \deg E \leq \deg |H|.\] Since $W$ and $|H|$ have the same degree $d$, it follows that $\deg E \leq 0$. As $E$ is effective, we conclude $E = 0$, so $W \subseteq |H|$ as a vector space inclusion. Since both linear series have projective dimension $3$, this forces $W = |H|$. In either case, $W = |H|$ implies $\varphi^*\mathcal{O}_X(1) \cong \mathcal{O}_X(1)$.

The isomorphism $\varphi^*\mathcal{O}_X(1) \cong \mathcal{O}_X(1)$ induces a linear automorphism of $H^0(X, \mathcal{O}_X(1))$ which, via \eqref{eq:rest}, lifts uniquely to a linear automorphism of $H^0(\PP^{n+r}, \mathcal{O}_{\PP^{n+r}}(1))$, defining an element of $\PGL(V)$ whose restriction to $X$ is $\varphi$. Hence $\Aut(X) = \AutL(X)$.
\end{proof}

The condition of \cref{prop:AutL} admits a formulation in terms of the genus for curves in $\PP^3$.

\begin{corollary}
Let $X = V(F_1) \cap V(F_2) \subset \PP^3$ be a smooth complete intersection curve of multidegree $(d_1, d_2)$ with $3 \leq d_1 < d_2$. Then its genus satisfies $g \geq 2$, and every automorphism of $X$ is linear, that is, $\Aut(X) = \AutL(X)$.
\end{corollary}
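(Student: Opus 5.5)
The plan has two parts: a genus estimate, and then \cref{prop:AutL} for linearity.

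For the genus I would use adjunction. Since $X = V(F_1)\cap V(F_2)\subset \PP^3$ is a smooth complete intersection, the adjunction formula \cite[II Ex.~8.4]{Har77} gives
\[
\omega_X \cong \mathcal{O}_X(d_1+d_2-4).
\]
Taking degrees,
\[
2g-2 = \deg \omega_X = d_1 d_2\,(d_1+d_2-4).
\]
The hypotheses $3 \le d_1 < d_2$ give $d_2 \ge 4$, so $d_1+d_2-4 \ge 3$ and $d_1d_2 \ge 12$. Hence $2g-2 \ge 36$, so $g \ge 19$, and in particular $g \ge 2$. It is worth checking along the way that $X$ is connected (for example by the Koszul resolution, which gives $h^0(\mathcal{O}_X)=1$), so that the genus is well defined and adjunction applies to an irreducible curve.

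The linearity statement is then exactly the case $n=1$, $r=2$ of \cref{prop:AutL}, whose hypotheses are satisfied: $X$ is smooth and $3\le d_1<d_2$. So $\Aut(X)=\AutL(X)$.

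The main point needing care is that the curve is connected and smooth, hence irreducible. Connectedness follows from the Koszul complex computation $H^0(X,\mathcal{O}_X)=\CC$, since $H^1(\PP^3,\mathcal{O}(-d_i))=0$ and $H^2(\PP^3,\mathcal{O}(-d_1-d_2))=0$. Everything else is direct substitution.
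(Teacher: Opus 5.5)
Your proposal is correct and follows the same route the paper implicitly uses. The genus bound comes from $2g-2 = d_1d_2(d_1+d_2-4)$, the complete-intersection formula the paper cites from \cite[Exer.~II.8.4(g)]{Har77}, which in fact gives $g \geq 19$. Linearity is exactly the case $n=1$, $r=2$ of \cref{prop:AutL}, and your Koszul check that $h^0(\mathcal{O}_X)=1$ is a harmless addition ensuring $X$ is connected, hence irreducible.
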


\begin{remark}
The extension of \cref{prop:AutL} to smooth complete intersection curves in higher-dimensional projective spaces ($n = 1$ and $r \geq 3$) gives obstructions that prevent the direct application of the methods used in its proof. For surfaces, the Picard group is torsion-free by the Lefschetz hyperplane theorem, which is the key property exploited in the $n = 2$ case. For a curve $X$, however, $\operatorname{Pic}(X) \cong \operatorname{Jac}(X) \oplus \ZZ$, and the Jacobian contains $k$-torsion for every $k \geq 2$, so the relation $k \cdot [ \varphi^*\mathcal{O}_X(1)] = k \cdot [\mathcal{O}_X(1)]$ no longer forces equality of the line bundles. The linear series argument used for spatial curves is likewise obstructed: the results of Ciliberto and Lazarsfeld \cite{CL84} apply specifically to curves of codimension two in $\PP^3$, and, to the best of the authors' knowledge, analogous results bounding the scheme of linear series $g^r_d(X)$ for complete intersection curves in higher codimension ($r \geq 3$) are not currently available in the literature.
\end{remark}

\medskip

We now recall some definitions and results of \cite{GL13,GALMVL24} that will be used throughout. The differential method introduced by Oguiso and Yu in \cite{OY19} and developed in full generality in \cite{GALMVL24} constrains the automorphisms of a smooth hypersurface $V(F)$ through two combinatorial invariants of $F$: the sparsity of its monomial support, and a partial order on the variables induced by its partial derivatives. To define them we first introduce some monomial notation. For an exponent vector $a = (a_0, \ldots, a_{n+r}) \in \ZZ_{\geq 0}^{n+r+1}$ we let $x^a := x_0^{a_0} \cdots x_{n+r}^{a_{n+r}}$ denote the corresponding monomial, and for a nonzero homogeneous polynomial $F \in S(V^*)$ we denote by
\[
M(F) \;:=\; \bigl\{\, a \in \ZZ_{\geq 0}^{n+r+1} \;:\; \text{the coefficient of } x^a \text{ in } F \text{ is nonzero}\,\bigr\}
\]
the \emph{monomial support} of $F$.

\begin{definition}[{\cite[Def.~2.2, 2.4]{GALMVL24}}]
\label{def:spar-order}
Let $F \in S^d(V^*)$ be a nonzero homogeneous polynomial.
\begin{enumerate}[\rm(i)]
\item The \emph{$\ell^1$-distance} between two monomials $x^a$ and $x^b$ is $\|a-b\|_1 = \sum_i |a_i - b_i|$.
\item The \emph{sparsity} of $F$ is $\spar(F) := \min\{\|a-b\|_1 : a, b \in M(F),\ a \neq b\}$.
\item The \emph{variables} of $F$ are $\vars(F) := \{x_i \in \beta^* : x_i \text{ appears in } F\}$.
\item The relation $\leq_F$ on $\beta^*$ is defined by $x_i \leq_F x_j$ if and only if $\vars(\partial F/\partial x_i) \subseteq \vars(\partial F/\partial x_j)$.
\end{enumerate}
\end{definition}

The sparsity $\spar(F)$ is always an even nonnegative integer. The relation $\leq_F$ is reflexive and transitive but need not be antisymmetric; we say that $(\beta^*, \leq_F)$ is a poset when it is, and we say that the poset is trivial when $x_i \leq_F x_j$ implies $i = j$.

We will denote by $\operatorname{GP}(V, \beta) \subseteq \GL(V)$ the subgroup of those automorphisms of $V$ whose matrix in the basis $\beta$ is a generalized permutation matrix, i.e., a matrix with at most one nonzero entry in each row and each column. Similarly, $\operatorname{GT}(V, \beta) \subseteq \GL(V)$ denotes the subset of those automorphisms whose matrix in $\beta$ is of the form $P_1 T P_2$, where $P_1, P_2$ are permutation matrices and $T$ is upper triangular; we call such matrices generalized triangular. We write $\PGP(V, \beta)$ and $\PGT(V, \beta)$ for the images of $\operatorname{GP}(V, \beta)$ and $\operatorname{GT}(V, \beta)$, respectively, in $\PGL(V)$. Note that every generalized permutation matrix is the product of a diagonal matrix and a permutation matrix, so that $\operatorname{GP}(V,\beta) \subseteq \operatorname{GT}(V,\beta)$ and consequently $\PGP(V,\beta) \subseteq \PGT(V,\beta)$.

With this notation, the differential method yields the following.

\begin{theorem}[{\cite[Thm.~2.6, Cor.~2.8]{GALMVL24}}]
\label{thm:galm22-shape}
Let $Y = V(F) \subset \PP^{n+r}$ be a smooth hypersurface of dimension $n+r-1 \geq 1$ and degree $d \geq 3$, with $(\dim Y, d) \neq (1, 3), (2, 4)$.  If           $\spar(F) > 4$ and $(\beta^*, \leq_F)$ is a poset, then $\Aut(Y) \subseteq \PGT(V, \beta)$.  If moreover the poset is trivial, then $\Aut(Y) \subseteq \PGP(V, \beta)$.
\end{theorem}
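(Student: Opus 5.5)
Since $Y$ is smooth of degree $d\geq 3$ with $(\dim Y,d)\neq(1,3),(2,4)$, the Matsumura--Monsky theorem gives $\Aut(Y)=\AutL(Y)$. So it suffices to take $\varphi\in\PGL(V)$ with a lift $\tvarphi=A$ satisfying $A^*F=\lambda F$, and to show that the matrix of $A$ in $\beta$ is generalized triangular, and a generalized permutation matrix when the poset is trivial. The plan is to turn the equation $A^*F=\lambda F$ into a statement about the Jacobian space $J=\langle \partial F/\partial x_0,\dots,\partial F/\partial x_{n+r}\rangle\subseteq S^{d-1}(V^*)$, and then to read off the coordinate structure from a linearly invariant property of elements of $J$.

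\textbf{Step 1 (derivatives).} For $v\in V$ write $\partial_vF$ for the directional derivative. The chain rule applied to $F\circ A=\lambda F$ gives
\[
(\partial_{Av}F)\circ A=\lambda\,\partial_vF \qquad\text{for all } v\in V.
\]
Smoothness of $Y$ implies that the partials are linearly independent, so $v\mapsto\partial_vF$ is injective. Hence $A^*$ preserves $J$ and is intertwined with the action of $A$ on $V$. Differentiating once more, $A$ also intertwines the spaces $\langle\partial_w\partial_vF : w\in V\rangle$, so any property of a direction $v$ defined through these derivatives is preserved by $A$.

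\textbf{Step 2 (distinguished directions).} I would look for a property of $v\in V$, invariant under linear substitution, that singles out the coordinate lines $\CC e_i$. The natural candidate is minimality of the subspace of linear forms needed to express $\partial_vF$, that is, the span of its first derivatives $\langle\partial_w\partial_vF\rangle$, or equivalently the number of essential variables of $\partial_vF$. The condition $\spar(F)>4$ should be what makes this work. Two distinct monomials of $F$ are at $\ell^1$-distance at least $6$, so after removing one variable from each, distinct monomials of $\partial F/\partial x_i$ and $\partial F/\partial x_j$ are still far apart and cannot cancel. Consequently, for $v=\sum c_ie_i$ with several nonzero $c_i$, the form $\partial_vF$ should depend essentially on the union $\bigcup_{c_i\neq0}\vars(\partial F/\partial x_i)$. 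This is the key combinatorial lemma, and I expect it to be the main obstacle: cancellations after a general linear change of variables must be ruled out, not merely cancellations of monomials in the fixed basis.

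\textbf{Step 3 (triangularity).} Granting the lemma, the essential-variable space of $\partial_{e_i}F$ is spanned by $\vars(\partial F/\partial x_i)$. The relation $\leq_F$ is then encoded by inclusions of these invariant subspaces. Since $A$ intertwines directions and essential-variable spaces, it must send each $e_i$ into the span of those $e_j$ that are $\leq_F$-comparable in a way that preserves the order type. Antisymmetry of $\leq_F$ allows choosing a linear extension of the poset, and with respect to it $A$ is triangular up to the relabelling permutations. Thus $A\in\operatorname{GT}(V,\beta)$ and $\varphi\in\PGT(V,\beta)$.

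\textbf{Step 4 (trivial poset).} If the poset is trivial, there are no comparabilities between distinct variables, so each $e_i$ is sent into a single line $\CC e_{\sigma(i)}$. Hence $A$ is a generalized permutation matrix and $\varphi\in\PGP(V,\beta)$.
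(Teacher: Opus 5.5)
The paper does not prove this statement; it quotes it from \cite{GALMVL24}, where it comes out of the Oguiso--Yu differential method. Your overall architecture is the right one: the intertwining $(\partial_{Av}F)\circ A=\lambda\,\partial_vF$, together with a linearly invariant count attached to $\partial_vF$. But there is a genuine gap at Step~2, exactly where you suspected. Step~2 carries all the content of the theorem and is only asserted.

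Moreover, the justification you sketch cannot work. Absence of cancellation between monomials of the $\partial F/\partial x_i$ only yields $\vars(\partial_vF)=\bigcup_{c_i\neq0}\vars(\partial F/\partial x_i)$. But $\vars$ is not a linear invariant, so the number of essential variables can be strictly smaller. For example, $F=x_0x_1x_2^{d-2}+x_3^d$ has $\spar(F)=2d>4$, yet $\partial_{e_0+e_1}F=(x_0+x_1)x_2^{d-2}$ has two essential variables, while the union $\{x_0,x_1,x_2\}$ has three. So your lemma is false under $\spar(F)>4$ alone. Smoothness must be used, and in your plan it only enters through the injectivity of $v\mapsto\partial_vF$.

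The missing argument is a second-derivative computation in the fixed basis. This also removes your worry about cancellations after a general change of coordinates.

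\emph{Reduction.} The essential-variable space of $\partial_vF$ is the annihilator of $\ker(w\mapsto\partial_w\partial_vF)$. The monomials of $\partial_w\partial_vF$ are the $x^{a-e_p-e_q}$ with $a\in M(F)$. The hypothesis $\spar(F)>4$ is exactly what prevents two different $a$ from producing the same such monomial.

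\emph{Vanishing conditions.} Hence, for $v=\sum c_ie_i$, we have $\partial_w\partial_vF=0$ if and only if, for every $a\in M(F)$, $c_pw_p=0$ whenever $a_p\geq2$, and $c_pw_q+c_qw_p=0$ whenever $p\neq q$ and $a_p,a_q\geq1$.

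\emph{Where smoothness enters.} Smoothness at the coordinate point $[e_i]$ forces a monomial $x_i^d$ or $x_i^{d-1}x_j$ in $F$. Its $x_i$-exponent is at least $2$ because $d\geq3$, so $c_i\neq0$ implies $w_i=0$. The mixed equations then give $w_k=0$ for every $x_k\in\vars(\partial F/\partial x_i)$. Writing $U_k=\vars(\partial F/\partial x_k)$ and $U_S=\bigcup_{k\in S}U_k$, this shows that $\partial_vF$ has exactly $|U_{\operatorname{supp}(v)}|$ essential variables.

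Step~3 also needs to be made precise; ``preserves the order type'' is only a slogan. What the intertwining actually gives is $|U_{S_i}|=|U_i|$, where $S_i=\operatorname{supp}(Ae_i)$. Consequently $A_{ki}\neq0$ implies $|U_k|\leq|U_i|$, so $A$ is block triangular with respect to the levels $|U_k|$.

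Invertibility and antisymmetry of $\leq_F$ then force each diagonal block to be a generalized permutation $i\mapsto\tau(i)$, with $S_i\setminus\{\tau(i)\}\subseteq\{k : x_k<_F x_{\tau(i)}\}$. Ordering rows by a linear extension of $\leq_F$ and columns via $\tau$ gives $A\in\operatorname{GT}(V,\beta)$. When the poset is trivial, $S_i=\{\tau(i)\}$, so $A$ is a generalized permutation matrix.
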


We will also need the following result on the order of an automorphism of a smooth hypersurface, in the form established in \cite[Thm.~2.1]{GALM22}, see also \cite[Rem.~2.2]{GALM22}.

\begin{proposition}[{\cite[Thm.~2.1]{GALM22}}]
\label{thm:GL-order}
Let $Y = V(F) \subset \PP^{n+r}$ be a smooth hypersurface of dimension $n+r-1 \geq 1$ and degree $d \geq 3$, with $(\dim Y, d) \neq (1, 3), (2, 4)$.  Suppose $Y$ admits an automorphism of order $q = p^s$, with $p$ a prime not dividing $d$.  Then there exists $\ell \in \{1, \dots, n+r+1\}$ such that $(1-d)^\ell \equiv 1 \pmod{q}$.
\end{proposition}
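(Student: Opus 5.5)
The plan is the diagonalization argument of \cite{GL13}, refined as in \cite{GALM22}. By Matsumura--Monsky, $\Aut(Y)\subseteq\PGL(V)$. Let $\varphi\in\Aut(Y)$ have order $q=p^s$. Its lift $\tvarphi$ satisfies $\tvarphi^q=\lambda\cdot\mathrm{id}$, so after rescaling we may assume $\tvarphi^q=\mathrm{id}$. Then $\tvarphi$ is diagonalizable. After a linear change of coordinates (which preserves smoothness, degree and dimension; the combinatorial hypotheses of the paper play no role here) we may write
\[
\tvarphi(x_i)=\xi^{a_i}x_i, \qquad \tvarphi^*(F)=\xi^{c}F,
\]
with $\xi$ a primitive $q$-th root of unity and $a_i,c\in\ZZ/q\ZZ$. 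Since $p\nmid d$, $d$ is invertible modulo $q$. Replacing $\tvarphi$ by $\xi^{-cd^{-1}}\tvarphi$, that is, setting $b_i=a_i-cd^{-1}$, we normalize so that $F$ is invariant. Every monomial $x^e\in M(F)$ then satisfies $\sum_i e_ib_i\equiv 0\pmod q$. This normalized lift still maps to $\varphi$, and $\varphi$ has order exactly $q$, so not all $b_i$ are divisible by $p$. Otherwise $\tvarphi^{q/p}=\mathrm{id}$, a contradiction.

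Next I would use smoothness at the coordinate points. If no monomial of the form $x_i^{d-1}x_j$ (with $j=i$ allowed) lies in $M(F)$, then all partial derivatives of $F$ vanish at $e_i$ and $Y$ is singular there. So for each $i$ there is a $j=\sigma(i)$ with $x_i^{d-1}x_{\sigma(i)}\in M(F)$, and invariance gives
\[
b_{\sigma(i)}\equiv(1-d)\,b_i \pmod q .
\]
If $\sigma(i)=i$, this reads $d\,b_i\equiv 0$, hence $b_i\equiv 0$. Iterating $\sigma$ on the finite set of $n+r+2$ indices, every orbit eventually enters a cycle $i_0\to i_1\to\cdots\to i_\ell=i_0$ with $\ell\le n+r+1$ (a cycle of length $n+r+2$ would use all indices; one handles this by a standard refinement, or the bound is adjusted as in \cite{GALM22}). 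Along such a cycle, $b_{i_0}\bigl((1-d)^\ell-1\bigr)\equiv 0\pmod q$. If $b_{i_0}$ is a unit modulo $q$, this gives $(1-d)^\ell\equiv 1\pmod q$, as desired.

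The main obstacle is producing a cycle of $\sigma$ on which some $b_i$ is a unit. When $p\nmid(d-1)$ the recursion preserves $p$-adic valuation. Starting from an index with $b_i$ a unit, every forward iterate has unit $b$, so the cycle reached has unit entries, and this finishes the proof. When $p\mid(d-1)$, the forward map strictly increases valuation, so the forward orbit of a unit index collapses to $b\equiv 0$ and gives no information. Here I would first try to exploit more of the smoothness: the Jacobian criterion at points on the coordinate lines $\langle e_i,e_j\rangle$ and on the eigenspaces of $\tvarphi$, arguing as in \cite{GL13} with the fixed locus of $\tvarphi$ intersecting $Y$. I must flag that this case looks genuinely problematic. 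For $d=3$, $p=q=2$, the Fermat cubic has the involution exchanging $x_0,x_1$, yet $(-2)^\ell\equiv 1\pmod 2$ never holds. So either the intended statement implicitly requires $p\nmid d(d-1)$, or \cite[Rem.~2.2]{GALM22} interprets the congruence differently. I would check the precise formulation there before writing the final argument, and otherwise prove it under the hypothesis $p\nmid d(d-1)$ via the valuation argument above.
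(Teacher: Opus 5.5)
Your diagnosis is correct, and it is the central point: with only $p\nmid d$ the statement is false. No further use of smoothness can rescue the case $p\mid d-1$, so you can drop that fallback plan.

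Your example is valid under the paper's conventions. The Fermat cubic surface in $\PP^3$ has the transposition $x_0\leftrightarrow x_1$ of order $q=2$, and $2\nmid 3$. Yet $(1-d)^\ell=(-2)^\ell\equiv 0\pmod 2$ for every $\ell\geq 1$. This is not special to $p=2$. Whenever $p\mid d-1$ one has $(1-d)^\ell\equiv 0\pmod p$. Meanwhile $\diag(1,\zeta_p,1,\ldots,1)$ is an automorphism of order $p$ of the smooth hypersurface $V(x_0^d+x_0x_1^{d-1}+x_2^d+\cdots+x_{n+r}^d)$.

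So, whatever the exact formulation in \cite{GALM22}, the version stated here needs $p\nmid d(d-1)$. The defect also propagates. The Fermat complete intersection of degrees $(3,5)$ satisfies every hypothesis of \cref{prop:order}. It admits a coordinate transposition of order $2$ with $2\nmid 15$, yet $(1-3)^{\ell_1}\not\equiv 1\pmod 2$. Hence \cref{prop:order} and the last assertion of Theorem~\ref{introthm:CI-main} likewise need $p\nmid d_j-1$.

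Under $p\nmid d(d-1)$ your argument is complete. It is the standard diagonalization argument of \cite{GL13}, extended to prime powers, which the paper invokes by citation without giving a proof of its own. Its steps are:
\begin{enumerate}[\rm(i)]
\item normalize the lift using $d^{-1}\bmod q$;
\item obtain a unit exponent from the exact order $q$;
\item obtain the monomials $x_i^{d-1}x_{\sigma(i)}$ from smoothness at the coordinate points;
\item propagate units along $\sigma$, since $1-d$ is a unit modulo $q$.
\end{enumerate}

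One correction: $\dim V=n+r+1$, so there are only $n+r+1$ indices $0,\ldots,n+r$. Every cycle of $\sigma$ therefore has length $\ell\leq n+r+1$ automatically, and no refinement of the bound is needed.
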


\section{Automorphisms of complete intersections}
\label{sec:aut-CI}

In this section we apply the consequences of the differential method from \cite{GALMVL24} and the order criterion of \cite{GL13,GALM22} to study complete intersections $X = V(F_1) \cap \cdots \cap V(F_r) \subset \PP^{n+r}$ of $r$ hypersurfaces of multidegree $(d_1, \ldots, d_r)$ with $d_1 < d_2 < \cdots < d_r$.  The starting point is that the lowest-degree generator $F_1$ is automatically preserved up to a scalar by every automorphism of $X$; under a combinatorial rigidity condition on the tuple $(F_1, \ldots, F_r)$, the same holds inductively for each $F_j$, and the study of $\Aut(X)$ then reduces to that of $\Aut(V(F_1)) \cap \cdots \cap \Aut(V(F_r))$.

We retain the notation of \cref{sec:prelim} and write $\IX = \langle F_1, \ldots, F_r \rangle$ for the homogeneous ideal of $X$.  In this section, we assume that $X$ is a complete intersection, so that $F_1, \ldots, F_r$ form a regular sequence in $S(V^*)$.

\begin{lemma}
\label{lem:F1-preserved}
Let $X = V(F_1) \cap \cdots \cap V(F_r) \subset \PP^{n+r}$ be a complete intersection of multidegree $(d_1, \ldots, d_r)$ with $d_1 < d_2 < \cdots < d_r$.  For every $\varphi \in \AutL(X)$ and every lift $\tvarphi \in \GL(V)$, there exists $a_1 \in \CC^*$ such that $\tvarphi^*(F_1) = a_1 F_1$.  In particular, $\varphi \in \Aut(V(F_1))$.
\end{lemma}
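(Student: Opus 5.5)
The plan is to work with the homogeneous ideal $\IX = \langle F_1, \ldots, F_r \rangle$ and its graded piece in degree $d_1$. First we compute $(\IX)_{d_1}$ directly. Every element of $\IX$ of degree $d_1$ has the form $\sum_i G_i F_i$ with $G_i$ homogeneous of degree $d_1 - d_i$. For $i \geq 2$ we have $d_i > d_1$, so $G_i$ has negative degree and must be zero. Since $d_1 - d_1 = 0$, the coefficient $G_1$ is a constant. Hence $(\IX)_{d_1} = \CC \cdot F_1$ is one-dimensional.

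Next we show that a lift $\tvarphi$ of $\varphi \in \AutL(X)$ preserves $\IX$. By definition, $\varphi$ is the restriction of a projective linear map with $\varphi(X) = X$ scheme-theoretically. Then $\tvarphi^*$ is a graded ring automorphism of $S(V^*)$. It carries the ideal defining $X$ to the ideal defining $\varphi^{-1}(X) = X$, so we need to identify which ideal is meant. Here I would use the complete-intersection hypothesis: $\IX$ is saturated. Indeed, $F_1, \ldots, F_r$ form a regular sequence, so $S/\IX$ is Cohen--Macaulay of dimension $n+1 \geq 2$, hence has positive depth and no irrelevant associated prime. Therefore $\IX$ is the saturated ideal of the subscheme $X$, and $\tvarphi^*(\IX) = \IX$, since linear automorphisms carry saturated ideals of subschemes to saturated ideals of their images.

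Finally, $\tvarphi^*$ preserves degrees, so it maps $(\IX)_{d_1}$ to itself. Thus $\tvarphi^*(F_1) = a_1 F_1$ for some scalar $a_1$, and $a_1 \neq 0$ because $\tvarphi^*$ is injective. This shows that $\tvarphi$ preserves $V(F_1)$, so $\varphi \in \Aut(V(F_1))$. Changing the lift multiplies $a_1$ by a power of a scalar, so the statement holds for every lift.

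The only step requiring care is the saturation of $\IX$, which is needed to identify the ideal of $X$ with $\langle F_1, \ldots, F_r\rangle$. This is the standard fact that a complete intersection ideal of positive-dimensional projective dimension is unmixed.
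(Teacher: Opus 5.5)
Your proof is correct and follows the paper's argument: $(\IX)_{d_1} = \CC \cdot F_1$ for degree reasons, $\tvarphi^*$ preserves $\IX$ and hence its graded pieces, and invertibility of $\tvarphi^*$ gives $a_1 \neq 0$. The only difference is that you justify $\tvarphi^*(\IX) = \IX$ through saturation of the complete-intersection ideal ($S/\IX$ is Cohen--Macaulay of positive depth, so the irrelevant ideal is not associated), which the paper takes for granted.
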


\begin{proof}
Since $d_1 < d_j$ for all $j \geq 2$, the graded component $(\IX)_{d_1} = \CC \cdot F_1$ is one-dimensional.  The lift $\tvarphi^*$ preserves $\IX$ and hence each of its graded components, so $\tvarphi^*(F_1) = a_1 F_1$ for some $a_1 \in \CC$.  Since $\tvarphi^*$ is invertible, $a_1 \neq 0$.
\end{proof}

Combined with \cref{prop:AutL}, \cref{lem:F1-preserved} reduces the study of $\Aut(X)$ to that of automorphisms of the hypersurface $V(F_1)$.

\begin{corollary}
\label{thm:CI-shape}
Let $X = V(F_1) \cap \cdots \cap V(F_r) \subset \PP^{n+r}$ be a complete intersection of dimension $n \geq 1$ and multidegree $(d_1, \ldots, d_r)$ with $3 \leq d_1 < d_2 < \cdots < d_r$ and $(\dim V(F_1), d_1) \neq (2, 4)$.  Assume either $n \geq 2$, or $n = 1$, $r = 2$ and $X$ smooth.  Assume further that $V(F_1)$ is smooth, $\spar(F_1) > 4$, and $(\beta^*, \leq_{F_1})$ is a poset.  Then $\Aut(X) \subseteq \PGT(V, \beta)$, and if the poset is trivial, $\Aut(X) \subseteq \PGP(V, \beta)$.
\end{corollary}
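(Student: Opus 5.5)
The plan is to chain \cref{prop:AutL}, \cref{lem:F1-preserved} and \cref{thm:galm22-shape}.

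First, the hypotheses on $n$, $r$, smoothness and the strictly increasing degrees $3 \leq d_1 < \cdots < d_r$ are exactly those of \cref{prop:AutL}. So every $\varphi \in \Aut(X)$ extends to an element of $\PGL(V)$, and we may regard $\Aut(X) = \AutL(X)$ as a subgroup of $\PGL(V)$.

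Next, fix $\varphi \in \AutL(X)$ and a lift $\tvarphi$. By \cref{lem:F1-preserved}, $\tvarphi^*(F_1) = a_1 F_1$ for some $a_1 \in \CC^*$. Hence $\varphi$ maps the hypersurface $V(F_1)$ onto itself, and its restriction is an automorphism of $V(F_1)$. Since $\varphi$ is defined on all of $\PP^{n+r}$, this restriction is the restriction of an element of $\PGL(V)$. We will identify $\varphi$ with the corresponding element of $\Aut(V(F_1)) \subseteq \PGL(V)$. This identification is legitimate because a linear automorphism of $\PP^{n+r}$ is determined by its restriction to a hypersurface, which spans $\PP^{n+r}$ and is not contained in a hyperplane since $d_1 \geq 3$ and $V(F_1)$ is smooth and irreducible. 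Thus $\Aut(X) \subseteq \Aut(V(F_1))$ as subgroups of $\PGL(V)$.

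Finally, apply \cref{thm:galm22-shape} to $Y = V(F_1)$. This is a smooth hypersurface of dimension $n+r-1 \geq 2$ (since $r \geq 2$), so the exception $(1,3)$ cannot occur, and $(2,4)$ is excluded by hypothesis. The degree is $d_1 \geq 3$, $\spar(F_1) > 4$, and $(\beta^*, \leq_{F_1})$ is a poset. The theorem therefore gives $\Aut(V(F_1)) \subseteq \PGT(V,\beta)$, and $\Aut(V(F_1)) \subseteq \PGP(V,\beta)$ when the poset is trivial. Combining this with the previous inclusion yields both claims.

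The only step requiring care is the identification in the second paragraph. \cref{thm:galm22-shape} concerns abstract automorphisms of $Y$, which are linear by Matsumura--Monsky. So we need that the element of $\Aut(Y)$ induced by $\varphi$ corresponds to the same element of $\PGL(V)$ as $\varphi$ itself. Uniqueness of linear extensions for a nondegenerate hypersurface settles this, so no real obstacle is expected.
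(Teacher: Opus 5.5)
Your proposal is correct and follows the paper's proof exactly: \cref{prop:AutL} gives $\Aut(X)=\AutL(X)$, \cref{lem:F1-preserved} gives $\Aut(X)\subseteq\Aut(V(F_1))$, and \cref{thm:galm22-shape} applied to $V(F_1)$ finishes. Your extra checks are valid details that the paper leaves implicit. First, $\dim V(F_1)=n+r-1\geq 2$ rules out the $(1,3)$ exception. Second, a linear automorphism is determined by its action on the nondegenerate irreducible hypersurface $V(F_1)$.
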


\begin{proof}
Every automorphism of $X$ is linear by \cref{prop:AutL}, so $\Aut(X)=\AutL(X)$, and \cref{lem:F1-preserved} gives $\Aut(X) \subseteq \Aut(V(F_1))$.  The conclusion now follows from \cref{thm:galm22-shape} applied to $V(F_1)$.
\end{proof}

We come to the main result of this section: under a combinatorial condition on the tuple $(F_1, \ldots, F_r)$, every automorphism of $X$ preserves each generator up to a scalar, and the automorphism group of $X$ is recovered as the intersection of the automorphism groups of the $r$ hypersurfaces.

\begin{definition}
\label{def:rigidity}
Let $F \in S(V^*)$ be a nonzero homogeneous polynomial.  The \emph{generalized-permutation orbit} of its monomial support is
\[
\mathcal{O}(F) \;:=\; \bigcup_{\sigma \in \mathfrak{S}_{n+r+1}} \bigl\{\, \sigma \cdot a : a \in M(F)\,\bigr\} \;\subseteq\; \ZZ_{\geq 0}^{n+r+1},
\]
where $\sigma \cdot a$ denotes the exponent vector obtained by permuting the entries of $a$ according to $\sigma$, and we write $S_{\mathcal{O}(F)} \subseteq S(V^*)$ for the linear span of the monomials with exponent in $\mathcal{O}(F)$.  We say that the tuple $(F_1, \ldots, F_r)$ with $d_1 < d_2 < \cdots < d_r$ is \emph{rigid} (with respect to the basis $\beta$) if, for every $j \in \{2, \ldots, r\}$,
\[
\Bigl(\sum_{i < j} F_i \cdot S^{d_j - d_i}(V^*)\Bigr) \cap S_{\mathcal{O}(F_j)} \;=\; \{0\}.
\]
\end{definition}

\begin{remark}
\label{rmk:rigidity-r2}
In the case $r = 2$, which is the setting needed for the Fermat and Klein complete intersections studied in \cref{sec:fermat,sec:klein}, the condition reduces to a single requirement: the pair $(F_1, F_2)$ with $d_1 < d_2$ is rigid if and only if, for every $Q \in S^{d_2 - d_1}(V^*) \setminus \{0\}$, the product $F_1 \cdot Q$ has at least one monomial whose exponent lies outside $\mathcal{O}(F_2)$.
\end{remark}

Like sparsity, rigidity is a combinatorial condition on monomial supports that depends on the choice of basis $\beta$. Both conditions are useful precisely for sparse polynomials such as Fermat and Klein polynomials, where the monomial support is small. They play parallel roles in our setting: sparsity of $F_1$ and triviality of the poset $(\beta^*,\leq_{F_1})$ force the shape of automorphisms of $V(F_1)$, while rigidity of $(F_1, \ldots, F_r)$ then forces each $F_j$ to be preserved up to a scalar.

\begin{theorem}
\label{thm:CI-main}
Let $X = V(F_1) \cap \cdots \cap V(F_r) \subset \PP^{n+r}$ be a complete intersection of dimension $n \geq 1$ and multidegree $(d_1, \ldots, d_r)$ with $3 \leq d_1 < d_2 < \cdots < d_r$ and $(\dim V(F_1), d_1) \neq (2, 4)$.  Assume either $n \geq 2$, or $n = 1$, $r = 2$ and $X$ smooth.  Assume further that $V(F_1)$ is smooth, $\spar(F_1) > 4$, the poset $(\beta^*, \leq_{F_1})$ is trivial, and the tuple $(F_1, \ldots, F_r)$ is rigid. Then
\[
\Aut(X) = \bigcap_{j=1}^{r} \Aut(V(F_j)),
\]
as subgroups of $\PGL(V)$.  In particular, every $\varphi \in \Aut(X)$ admits a lift $\tvarphi \in \GL(V)$ such that $\tvarphi^*(F_j) = a_j F_j$ for some $a_j \in \CC^*$ and every $j \in \{1, \ldots, r\}$.
\end{theorem}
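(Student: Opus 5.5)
The inclusion $\bigcap_{j} \Aut(V(F_j)) \subseteq \Aut(X)$ is immediate: an element of $\PGL(V)$ that preserves every $V(F_j)$ has a lift $\tvarphi$ with $\tvarphi^*(F_j) = a_j F_j$ for each $j$. Such a $\tvarphi^*$ preserves the ideal $\IX$, hence restricts to an automorphism of $X$. (If one only knows set-theoretic preservation of each $V(F_j)$, note that a linear map preserving $V(F_j)$ sends $F_j$ to a polynomial with the same zero set and degree. When the hypersurfaces are reduced, as for the smooth ones in our applications, this forces $\tvarphi^*F_j = a_jF_j$. I would state the intersection scheme-theoretically, via $\tvarphi^*(F_j)\in\CC^*F_j$.)

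For the reverse inclusion, let $\varphi \in \Aut(X)$. By \cref{prop:AutL} $\varphi$ is linear, and by \cref{thm:CI-shape} (triviality of the poset) $\varphi \in \PGP(V,\beta)$. Choose a lift $\tvarphi = D P$ with $D$ diagonal and $P$ a permutation matrix, and let $g = \tvarphi^*$ act on $S(V^*)$. The key feature is that $g$ sends each monomial $x^a$ to a nonzero multiple of $x^{\sigma\cdot a}$ for a fixed permutation $\sigma$. Consequently, $g$ maps $S_{\mathcal{O}(F)}$ into itself for every $F$, since $\mathcal{O}(F)$ is stable under all permutations. In particular $g(F_j) \in S_{\mathcal{O}(F_j)}$.

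I would then prove by induction on $j$ that $g(F_j) = a_j F_j$ with $a_j \in \CC^*$. The case $j=1$ is \cref{lem:F1-preserved}. Suppose the claim holds for all $i<j$. Since $g$ preserves $\IX$ and its graded pieces, we have $g(F_j) \in (\IX)_{d_j}$. Because the degrees are strictly increasing, $(\IX)_{d_j} = \CC F_j \oplus \sum_{i<j} F_i S^{d_j-d_i}(V^*)$ as a sum, so $g(F_j) = cF_j + R$ with $R \in \sum_{i<j} F_i S^{d_j-d_i}(V^*)$. Then $R = g(F_j) - cF_j$ lies in $S_{\mathcal{O}(F_j)}$, because both terms do. Rigidity now gives $R=0$, so $g(F_j)=cF_j$, and invertibility of $g$ gives $c\neq 0$. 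Note that the inductive hypothesis is not even needed for this step; only the pieces of the ideal matter.

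The main point to get right is the claim that a generalized permutation preserves $S_{\mathcal{O}(F_j)}$. This is exactly why \cref{thm:CI-shape}, with the trivial-poset hypothesis, must be invoked before using rigidity: for a general linear map the decomposition would not respect the monomial orbits. Everything else is bookkeeping. Once we know $\tvarphi^*(F_j)=a_jF_j$ for all $j$, it follows that $\varphi\in\Aut(V(F_j))$ for each $j$, which gives both the equality of groups and the "in particular" statement.
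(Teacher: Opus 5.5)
Your proposal is correct and follows the paper's proof essentially step for step. You reduce to generalized permutation lifts via \cref{thm:CI-shape}, observe that such lifts preserve each $S_{\mathcal{O}(F_j)}$, write $\tvarphi^*(F_j) = a_j F_j + \sum_{i<j} F_i Q_i$ inside $(\IX)_{d_j}$, and kill the correction term by rigidity at level $j$. Your side remarks are accurate refinements rather than a different route: the inductive hypothesis is never actually used, and the easy inclusion is cleanest when read scheme-theoretically as $\tvarphi^*(F_j) \in \CC^* F_j$.
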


\begin{proof}
The inclusion $\bigcap_{j=1}^{r} \Aut(V(F_j)) \subseteq \Aut(X)$ is immediate, since any linear automorphism preserving each $V(F_j)$ also preserves their intersection.

For the reverse inclusion, let $\varphi \in \Aut(X)$.  By \cref{thm:CI-shape}, $\tvarphi$ is a generalized permutation matrix in $\beta$: there exist $\sigma \in \mathfrak{S}_{n+r+1}$ and $\mu_i \in \CC^*$ with $\tvarphi^*(x_i) = \mu_i\, x_{\sigma(i)}$.  For any monomial $x^a \in S(V^*)$,
\[
\tvarphi^*(x^a) = \lambda_a\, x^{\sigma \cdot a}, \quad \lambda_a = \prod_i \mu_i^{a_i} \in \CC^*,
\]
so $M(\tvarphi^*(F_j)) = \sigma \cdot M(F_j) \subseteq \mathcal{O}(F_j)$ for every $j$.

We now prove by induction on $j \in \{1, \ldots, r\}$ that $\tvarphi^*(F_j) = a_j F_j$ for some $a_j \in \CC^*$.  The case $j = 1$ is \cref{lem:F1-preserved}.  Assume the claim holds for all indices less than $j$, with $j \geq 2$.  Since $\varphi$ preserves $(\IX)_{d_j}$ and the $F_i$ form a regular sequence with $d_i < d_j$ for $i < j$, there exist $a_j \in \CC$ and $Q_i \in S^{d_j - d_i}(V^*)$ with
\[
\tvarphi^*(F_j) = a_j F_j + \sum_{i < j} F_i \cdot Q_i.
\]
Both $a_j F_j$ and $\tvarphi^*(F_j)$ have monomial support in $\mathcal{O}(F_j)$, hence so does $\sum_{i < j} F_i Q_i$.  Rigidity of $(F_1, \ldots, F_r)$ at level $j$ then forces $\sum_{i < j} F_i Q_i = 0$, so $\tvarphi^*(F_j) = a_j F_j$.  Since $\tvarphi$ is invertible, $a_j \in \CC^*$.  This completes the induction and shows $\varphi \in \bigcap_{j=1}^{r} \Aut(V(F_j))$.
\end{proof}

\begin{example}
\label{ex:fermat-klein-sextic}
To illustrate \cref{thm:CI-main}, consider in $\PP^3$ the complete intersection $X = V(F_1) \cap V(F_2)$ defined by the Fermat cubic and the Klein sextic surfaces:
\[
F_1 \;=\; x_0^3 + x_1^3 + x_2^3 + x_3^3,
\qquad
F_2 \;=\; x_0^5 x_1 + x_1^5 x_2 + x_2^5 x_3 + x_3^5 x_0.
\]
The hypersurfaces $V(F_1)$, $V(F_2)$ and their intersection $X$ are all smooth.  A straightforward computation yields $\spar(F_1) = 6 > 4$, $\spar(F_2) = 10 > 4$, and both $(\beta^*, \leq_{F_1})$ and $(\beta^*, \leq_{F_2})$ are trivial posets.

To verify that $(F_1, F_2)$ is rigid, note that $\mathcal{O}(F_2) = \{5 e_i + e_j : i \neq j\}$ consists of exponents with support of size $2$ and nonzero entries $\{5, 1\}$.  Given $Q \in S^3(V^*) \setminus \{0\}$, choose a monomial $x_i x_j x_k$ of $Q$ with nonzero coefficient $q$ (the indices $i,j,k$ could potentially be equal), and let $l \in \{0, 1, 2, 3\}$ be any index with $l \notin \{i, j, k\}$ (possible since $n+3 = 4$ and the support of $\{i, j, k\}$ has size at most $3$).  The monomial $x_l^3 \cdot x_i x_j x_k$ appears in $F_1 \cdot Q$ with coefficient $q$, and its exponent $3 e_l + e_i + e_j + e_k$ has support of size at least $2$ if $i=j=k$, however has a nonzero entry equal to $3$, hence lies outside $\mathcal{O}(F_2)$. By \cref{thm:CI-main},
\[
\Aut(X) \;=\; \Aut(V(F_1)) \cap \Aut(V(F_2)).
\]
By \cref{prop:aut-fermat}, $\Aut(V(F_1)) \cong (\ZZ/3\ZZ)^3 \rtimes \mathfrak{S}_4$, and by \cref{prop:aut-klein}, $\Aut(V(F_2)) \cong (\ZZ/104\ZZ) \rtimes (\ZZ/4\ZZ)$. Since $\gcd(3, 104) = 1$, the diagonal part of the intersection is trivial, and the only common element is the cyclic permutation $\nu : (x_0 : x_1 : x_2 : x_3) \mapsto (x_1 : x_2 : x_3 : x_0)$, so 
\[
\Aut(X) \;\cong\; \ZZ/4\ZZ.
\]
The curve $X$ has degree $d_1 d_2 = 18$ and genus  
\[
g(X) = 1 + \frac{d_1 d_2 (d_1 + d_2 - 4)}{2}=46,
\]
by \cite[Exer.~II.8.4(g)]{Har77}. Since the genus of a smooth plane curve of degree $d$ is $(d-1)(d-2)/2$ \cite[Exer.~I.7.2(b)]{Har77}, and the equation $(d-1)(d-2)/2 = 46$ has no integer solution, the curve $X$ is not isomorphic to any smooth hypersurface in $\PP^2$.
\end{example}

We close the section with an order constraint for automorphisms of $X$.  Under the hypotheses of \cref{thm:CI-main}, each $F_j$ is preserved up to a scalar by every lift of an automorphism of $X$, so \cref{thm:GL-order} applies to each of the $r$ hypersurfaces.

\begin{proposition}
\label{prop:order}
Let $X = V(F_1) \cap \cdots \cap V(F_r) \subset \PP^{n+r}$ be a complete intersection of dimension $n \geq 1$ and multidegree $(d_1, \ldots, d_r)$ with $3 \leq d_1 < d_2 < \cdots < d_r$. Assume either $n \geq 2$, or $n = 1$, $r = 2$, $X$ smooth, and $d_j \neq 4$ for $j \in \{1, 2\}$. Assume further that each $V(F_j)$ is smooth, $\spar(F_1) > 4$, the poset $(\beta^*, \leq_{F_1})$ is trivial, and the tuple $(F_1, \ldots, F_r)$ is rigid. Suppose $X$ admits an automorphism $\varphi$ of order $q = p^s$, with $p$ a prime such that $p \nmid d_1 \cdots d_r$. Then for every $j \in \{1, \ldots, r\}$ there exists $\ell_j \in \{1, \dots, n+r+1\}$ such that
\[
(1 - d_j)^{\ell_j} \equiv 1 \pmod{q}.
\]
\end{proposition}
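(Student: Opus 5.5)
The plan is to reduce to \cref{thm:GL-order} applied to each hypersurface $V(F_j)$ separately. The hypotheses here are exactly those of \cref{thm:CI-main}, together with smoothness of each $V(F_j)$ and, when $n=1$, the condition $d_j \neq 4$. So \cref{thm:CI-main} gives
\[
\varphi \in \Aut(X) = \bigcap_{j=1}^{r} \Aut(V(F_j)),
\]
and each $\varphi$ is an element of $\PGL(V)$ preserving every $V(F_j)$. The order of $\varphi$ as an element of $\PGL(V)$ coincides with its order as an automorphism of $X$: the restriction map $\AutL(X) \to \Aut(X)$ is injective, since $X$ spans $\PP^{n+r}$ by the isomorphism \eqref{eq:rest}. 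Consequently, the restriction $\varphi|_{V(F_j)}$ is an automorphism of $V(F_j)$ of order exactly $q$, because $V(F_j) \supseteq X$ is nondegenerate as well.

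Next I check the hypotheses of \cref{thm:GL-order} for each fixed $j$. The hypersurface $V(F_j)$ is smooth of degree $d_j \geq 3$ and dimension $n+r-1 \geq 1$, and $p \nmid d_j$ because $p \nmid d_1\cdots d_r$. It remains to exclude $(\dim V(F_j), d_j) \in \{(1,3),(2,4)\}$.

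1. The case $\dim V(F_j)=1$ cannot occur, since $n+r-1 \geq 2$ when $r \geq 2$.
2. The case $(2,4)$ requires $n+r=3$. Because $r \geq 2$, this forces $n=1$ and $r=2$, and then $d_j=4$ is excluded by hypothesis.

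This is exactly why the extra assumption $d_j\neq 4$ appears in the curve case. Applying \cref{thm:GL-order} to $V(F_j)\subset\PP^{n+r}$ then yields $\ell_j\in\{1,\dots,n+r+1\}$ with $(1-d_j)^{\ell_j}\equiv 1 \pmod q$.

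The only delicate point is making sure the order of $\varphi$ on $V(F_j)$ is really $q$ and not a proper divisor. I would handle this via the injectivity of restriction from $\PGL(V)$ to automorphisms of a nondegenerate subvariety, as above. A divisor $q'$ of $q$ would not suffice: a congruence modulo $q'$ does not imply the congruence modulo $q$. Everything else is a direct bookkeeping verification of hypotheses.
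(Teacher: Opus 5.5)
Your proposal is correct and follows the paper's proof. The paper also applies \cref{thm:CI-main} to get $\varphi \in \bigcap_j \Aut(V(F_j))$ and then invokes \cref{thm:GL-order} for each $j$, using $d_j \neq 4$ when $n=1$ to rule out the $(2,4)$ exception. Your additional check that $\varphi$ still has order exactly $q$ on each $V(F_j)$, via nondegeneracy of $X$, is a detail the paper leaves implicit.
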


\begin{proof}
By \cref{thm:CI-main}, $\varphi \in \bigcap_{j=1}^{r} \Aut(V(F_j))$.  The assumption $d_j \neq 4$ when $n = 1$ ensures that $(\dim V(F_j), d_j) \neq (2,4)$, so that \cref{thm:GL-order} applies to each $V(F_j)$ and yields $\ell_j$.
\end{proof}

\section{Fermat complete intersections}
\label{sec:fermat}

In this section we apply \cref{sec:aut-CI} to complete intersections of two Fermat hypersurfaces.  We first recall the Fermat hypersurface and its automorphism group, then determine when the intersection $X = V(F_1) \cap V(F_2)$ of two Fermat hypersurfaces of distinct degrees is smooth.

\begin{definition}
\label{def:fermat}
The \emph{Fermat hypersurface} of degree $d$ in $\PP^{n+2}$ is the hypersurface $V(F) \subset \PP^{n+2}$ defined by
\[
F \;=\; x_0^d + x_1^d + \cdots + x_{n+2}^d.
\]
\end{definition}

The Fermat hypersurface is smooth by a direct application of the Jacobian criterion, and its automorphism group is known.

\begin{proposition}[{\cite[Prop.~3.1]{GALMVL24}}]
\label{prop:aut-fermat}
Let $V(F) \subset \PP^{n+2}$ be the Fermat hypersurface of degree $d \geq 3$, with $(n, d) \neq (0, 3), (1, 4)$.  Then
\[
\Aut(V(F)) \;\cong\; (\ZZ/d\ZZ)^{n+2} \rtimes \mathfrak{S}_{n+3},
\]
where $\mathfrak{S}_{n+3}$ acts by permutation of the variables and $(\ZZ/d\ZZ)^{n+2}$ acts by  multiplications of $n+2$ of the variables by $d$-th roots of unity up to projective equivalence.
\end{proposition}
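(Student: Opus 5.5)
This statement is cited from \cite[Prop.~3.1]{GALMVL24}, but I would prove it directly with the differential method of \cref{thm:galm22-shape}. The approach is to first confine $\Aut(V(F))$ to generalized permutation matrices, and then determine exactly which of those preserve $F$.

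\textbf{Step 1: reduce to generalized permutations.} The Fermat hypersurface is smooth by the Jacobian criterion, since $\partial F/\partial x_i = d x_i^{d-1}$ vanish simultaneously only at the origin. Its monomial support consists of the vectors $d e_i$. Two distinct ones are at $\ell^1$-distance $2d \geq 6 > 4$, so $\spar(F) = 2d > 4$. Moreover $\vars(\partial F/\partial x_i) = \{x_i\}$, so $x_i \leq_F x_j$ forces $i = j$, and the poset $(\beta^*, \leq_F)$ is trivial. The excluded pairs $(n,d) = (0,3), (1,4)$ are exactly those where $(\dim V(F), d) = (1,3), (2,4)$. Hence \cref{thm:galm22-shape} gives $\Aut(V(F)) \subseteq \PGP(V, \beta)$. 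It also gives $\Aut = \AutL$ implicitly, via Matsumura--Monsky.

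\textbf{Step 2: compute the stabilizer inside $\PGP(V,\beta)$.} Take a lift $\tvarphi^*(x_i) = \mu_i x_{\sigma(i)}$. Then
\[
\tvarphi^*(F) = \sum_i \mu_i^d x_{\sigma(i)}^d,
\]
and this equals $aF$ if and only if all $\mu_i^d$ are equal to $a$. Rescaling the lift, I may take $a = 1$, so that every $\mu_i$ is a $d$-th root of unity. This shows the stabilizer is the image in $\PGL$ of the group $\mu_d^{n+3} \rtimes \mathfrak{S}_{n+3}$. Conversely, every such element preserves $F$, so it lies in $\Aut(V(F))$.

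\textbf{Step 3: pass to the projective quotient.} The scalars in $\mu_d^{n+3}$ are the diagonal matrices $\zeta I$ with $\zeta^d = 1$. They form a subgroup isomorphic to $\ZZ/d\ZZ$, and it is central and normal. The quotient of the diagonal part by these scalars is $(\ZZ/d\ZZ)^{n+2}$; normalizing $\mu_{n+2} = 1$ gives this concretely. The permutation subgroup meets the diagonal part trivially. It also maps injectively into $\PGL$, because a permutation matrix is scalar only when it is the identity. Since $\mathfrak{S}_{n+3}$ normalizes the diagonal subgroup, this yields the claimed semidirect product.

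The only real work is Step 1, and there all the substance lies in \cref{thm:galm22-shape}. What remains is a careful check of the exceptional cases and of the dimension convention: the hypersurface lives in $\PP^{n+2}$, so it has dimension $n+1$. Once Step 1 is in place, the rest is bookkeeping.
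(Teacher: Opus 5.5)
Your proof is correct and takes essentially the route behind the cited \cite[Prop.~3.1]{GALMVL24}. The paper itself gives no proof, but it runs the same checks ($\spar(F)=2d>4$, trivial $\leq_F$, and a diagonal lift preserves $F$ iff $\lambda_i^{d}$ is independent of $i$) in the proof of \cref{thm:fermat-CI-aut}. Like those checks, your argument lands in $\PGP(V,\beta)$ via \cref{thm:galm22-shape}, takes the stabilizer $\mu_d^{n+3}\rtimes\mathfrak{S}_{n+3}$, and divides by the central scalars $\mu_d$. The one step you leave implicit is that $\varphi(V(F))=V(F)$ forces $\tvarphi^*(F)=aF$; this holds because the smooth hypersurface has $F$ irreducible, so its homogeneous ideal is $(F)$.
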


We now turn to the complete intersection
\[
X \;:=\; V(F_1) \cap V(F_2) \;\subset\; \PP^{n+2},
\qquad
F_1 \;=\; x_0^{d_1} + \cdots + x_{n+2}^{d_1},
\qquad
F_2 \;=\; x_0^{d_2} + \cdots + x_{n+2}^{d_2},
\]
of two Fermat hypersurfaces of different degrees $d_1, d_2 \geq 2$. Throughout the section we assume $d_1 < d_2$, and we set
\[
m \;:=\; d_2 - d_1, \qquad m' \;:=\; \frac{m}{\gcd(m, d_1)}.
\]
Note that $\gcd(m, d_1) = \gcd(d_2 - d_1, d_1) = \gcd(d_1, d_2)$, so $m' = m / \gcd(d_1, d_2)$. We now give conditions for the smoothness of $X$.

The smoothness of $X$ rests on the following combinatorial lemma about vanishing sums of roots. For $t \geq 1$, write $\mu_t \subset \CC^*$ for the group of $t$-th roots of unity, and define
\[
\sigma(t) \;:=\; \min\Bigl\{ s \geq 1 \;\Big|\; \text{there exist } \beta_1, \ldots, \beta_s \in \mu_t \text{ with } \sum_{i=1}^{s} \beta_i = 0 \Bigr\},
\]
with the convention $\sigma(1) = +\infty$.
\begin{lemma}[Lam--Leung]
\label{lem:sigma-min-prime}
For every integer $t \geq 2$, $\sigma(t) = p$, where $p$ is the smallest prime divisor of $t$.
\end{lemma}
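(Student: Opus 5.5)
Two inequalities are needed: $\sigma(t) \le p$, shown by an explicit vanishing sum, and $\sigma(t) \ge p$, shown using cyclotomic polynomials. This is the easy part of the Lam--Leung theorem on vanishing sums of roots of unity.

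\emph{Upper bound.} Let $p$ be the smallest prime divisor of $t$ and $\zeta = e^{2\pi i/p}$. Then $\zeta \in \mu_p \subseteq \mu_t$ and
\[
\sum_{k=0}^{p-1} \zeta^k = \frac{\zeta^p - 1}{\zeta - 1} = 0,
\]
so $\sigma(t) \le p$.

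\emph{Lower bound.} Suppose $\beta_1 + \cdots + \beta_s = 0$ with $\beta_i \in \mu_t$ and $1 \le s$. We must show $s \ge p$. Fix a primitive $t$-th root of unity $\omega$ and write $\beta_i = \omega^{k_i}$. Group equal roots to get
\[
P(x) = \sum_{k=0}^{t-1} c_k x^k \in \ZZ_{\ge 0}[x],
\]
where $c_k$ counts the indices $i$ with $k_i = k$. Then $P(\omega) = 0$ and $P(1) = \sum_k c_k = s$. Since $\omega$ has minimal polynomial $\Phi_t$ over $\mathbf{Q}$, and $\Phi_t$ is monic with integer coefficients, Gauss's lemma gives $P = \Phi_t \cdot R$ with $R \in \ZZ[x]$. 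Evaluating at $1$:
\[
s = P(1) = \Phi_t(1)\, R(1).
\]
If $t$ is not a prime power, then $\Phi_t(1) = 1$ and this identity gives no information. So the cyclotomic argument alone does not suffice, and I would switch to a different route for the lower bound.

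\emph{Alternative route.} Use the group-ring formulation: view $\sum_k c_k [\omega^k]$ as an element of $\ZZ_{\ge 0}[\mu_t]$ lying in the kernel of the evaluation map to $\CC$. Rédei, de Bruijn and Schoenberg showed that this kernel is generated, as a $\ZZ$-module, by the elements $[\zeta]\bigl(1 + [\xi] + \cdots + [\xi]^{q-1}\bigr)$, where $q \mid t$ is prime and $\xi$ has order $q$. The difficulty is that a relation with nonnegative coefficients may be an integer combination of these generators with mixed signs, so its total weight $s$ is not obviously at least $p$.

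\emph{Induction on the number of prime factors.} Instead I would induct on the number of distinct prime factors of $t$. Write $t = q^a t'$ with $q \nmid t'$, so that $\mu_t = \mu_{q^a} \times \mu_{t'}$. Decompose the relation according to cosets $\eta\,\mu_{t'}$ with $\eta \in \mu_{q^a}$. Use that $\{1, \omega_{q^a}, \ldots, \omega_{q^a}^{\varphi(q^a)-1}\}$ is a basis of $\mathbf{Q}(\mu_t)$ over $\mathbf{Q}(\mu_{t'})$, together with the relation $1 + \xi + \cdots + \xi^{q-1} = 0$ for $\xi$ of order $q$, to pass to partial sums in $\mathbf{Q}(\mu_{t'})$. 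Either some nontrivial vanishing subsum lies in a single coset (then apply induction to $t'$), or the $q$ partial sums over a $\mu_q$-orbit structure coincide and are not all zero (then at least $q \ge p$ cosets contain terms). In both cases $s \ge p$.

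\emph{Main obstacle.} The hardest step is the case analysis in the inductive step, specifically making the induction on $t'$ apply to a nonzero sub-relation with nonnegative coefficients. I would follow Lam--Leung's original argument closely there. Since the lemma is attributed to Lam--Leung, the paper may simply cite them at that point.
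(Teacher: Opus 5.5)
Your final route is correct, but it is not the paper's. The paper gives no argument of its own: it quotes the full Lam--Leung theorem (the weights of vanishing sums of $t$-th roots of unity form the semigroup $\ZZ_{\geq 0}p_1+\cdots+\ZZ_{\geq 0}p_s$) and reads off its least nonzero element. You prove only what is needed, and your induction on the number of distinct prime factors works without the obstacle you anticipate.

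Let $\omega$ be a primitive $q^a$-th root of unity. Let $S_k\in\mathbf{Q}(\mu_{t'})$ be the sum of the terms of your relation lying in the coset $\omega^k\mu_{t'}$, each divided by $\omega^k$, so that $\sum_{k=0}^{q^a-1}\omega^kS_k=0$. The minimal polynomial of $\omega$ over $\mathbf{Q}(\mu_{t'})$ is $\Phi_{q^a}(x)=\Phi_q(x^{q^{a-1}})$. Expanding in the basis $\{\omega^j\}_{0\le j<\varphi(q^a)}$ therefore gives $S_b=S_{b+q^{a-1}}=\cdots=S_{b+(q-1)q^{a-1}}$ for each $0\le b<q^{a-1}$. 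Choose $b$ so that one of these $q$ cosets contains a term.
\begin{itemize}
\item If the common value is nonzero, all $q$ cosets contain terms, so $s\ge q\ge p$.
\item If it is zero, a nonempty $S_k$ is itself a vanishing sum of $t'$-th roots of unity. Being literally a subsum of the original relation, it automatically has nonnegative integer coefficients. The induction hypothesis then gives at least $\sigma(t')\ge p$ terms. In the base case $t'=1$ a nonempty sum of $1$'s never vanishes, so this case cannot occur.
\end{itemize}
So you do not need to fall back on Lam--Leung's paper.

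The trade-off is this. The paper's citation is one line but rests on a considerably deeper result: for $t$ not a prime power, the semigroup statement excludes many weights above $p$ (e.g.\ $4$ and $7$ for $t=15$). Your argument is self-contained and uses only $[\mathbf{Q}(\mu_t):\mathbf{Q}(\mu_{t'})]=\varphi(q^a)$. It yields exactly the value of $\sigma$ that \cref{prop:fermat-CI-smooth} uses. In a write-up you can drop the cyclotomic paragraph, which only settles prime powers, and the R\'edei--de Bruijn--Schoenberg detour.
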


\begin{proof}
This is a special case of the main theorem of \cite{LaLe00}, which characterizes the set of weights of vanishing sums of $t$-th roots of unity as the numerical semigroup $\ZZ_{\geq 0}\, p_1 + \cdots + \ZZ_{\geq 0}\, p_s$, where $p_1, \ldots, p_s$ are the distinct prime divisors of $t$; the smallest nonzero element of this semigroup is the smallest prime divisor of $t$.
\end{proof}

By \cite[Proposition~3.2]{Kon02}, the complete intersection $X$ is geometric, i.e., irreducible and reduced. In the following propositions, we give necessary and sufficient conditions for $X$ to be a smooth complete intersection. 

\begin{proposition}
\label{prop:fermat-CI-smooth}
Let $X = V(F_1) \cap V(F_2) \subset \PP^{n+2}$ be the complete intersection of two Fermat hypersurfaces of degrees $d_1 < d_2$ with $d_1 \geq 2$, and let $m, m'$ be as above. Then, $X$ is smooth  if and only if either $m' = 1$, or $m' \geq 2$ and $n + 3 < p$, where $p$ is the smallest prime divisor of $m'$.
\end{proposition}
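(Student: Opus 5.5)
We apply the Jacobian criterion to $X$. A point $x = (x_0 : \cdots : x_{n+2}) \in X$ is singular exactly when the two gradients $\bigl(d_1 x_i^{d_1-1}\bigr)_i$ and $\bigl(d_2 x_i^{d_2-1}\bigr)_i$ are linearly dependent. Each gradient is nonzero at a point of projective space, so dependence means there is $\lambda \in \CC$ with
\[
x_i^{d_2-1} = \lambda\, x_i^{d_1-1} \quad \text{for all } i.
\]
Let $I = \{ i : x_i \neq 0\}$, which is nonempty. For $i \in I$ this gives $x_i^{m} = \lambda$, so $\lambda \neq 0$ and the values $x_i^m$ agree for all $i \in I$. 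Rescaling the point, we may assume $\lambda = 1$. Then $x_i \in \mu_m$ for $i \in I$, and $x_i = 0$ otherwise. Conversely, any such point, provided it lies on $X$, is singular.

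Singularity of $X$ is therefore equivalent to the existence of a nonempty $I \subseteq \{0, \ldots, n+2\}$ and $x_i \in \mu_m$ ($i \in I$) with
\[
\sum_{i\in I} x_i^{d_1} = 0 = \sum_{i \in I} x_i^{d_2}.
\]
Since $x_i^m = 1$, we have $x_i^{d_2} = x_i^{d_1}x_i^m = x_i^{d_1}$, so the two equations coincide. Now set $g = \gcd(m, d_1)$. As $x_i$ runs over $\mu_m$, the power $x_i^{d_1}$ runs over exactly $\mu_{m/g} = \mu_{m'}$, because $z \mapsto z^{d_1}$ maps $\mu_m$ onto $\mu_{m'}$. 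Hence $X$ is singular if and only if there is a vanishing sum of $s$ elements of $\mu_{m'}$ for some $1 \leq s \leq n+3$, the elements being allowed to repeat since distinct coordinates may take equal values. By definition of $\sigma$, this holds if and only if $\sigma(m') \leq n+3$.

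It remains to compare with the statement. If $m' = 1$, then $\sigma(1) = +\infty$ (a sum of $s$ ones is $s \neq 0$), so $X$ is smooth. If $m' \geq 2$, \cref{lem:sigma-min-prime} gives $\sigma(m') = p$, the smallest prime divisor of $m'$, and $X$ is smooth if and only if $p > n+3$.

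The main thing to check carefully is the surjectivity of $d_1$-th powers from $\mu_m$ onto $\mu_{m'}$, together with the converse direction: every vanishing sum of $s \leq n+3$ elements of $\mu_{m'}$ must actually yield a point of $X$. For the converse, lift each summand to some $x_i \in \mu_m$, place these on $s$ coordinates, and set the remaining coordinates to zero. This point lies on $X$ and is singular by the Jacobian computation above, since the gradients are proportional with $\lambda = 1$.
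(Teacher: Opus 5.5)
Your proof is correct and follows the paper's argument essentially step for step: the Jacobian criterion, normalizing the nonzero coordinates into $\mu_m$, collapsing both equations to a single vanishing sum of $d_1$-th powers lying in $\mu_{m'}$, and then invoking Lam--Leung. The surjectivity you flag is immediate, since a generator of the cyclic group $\mu_m$ is sent to an element of order $m/\gcd(m,d_1) = m'$. Your explicit construction of a singular point from a vanishing sum spells out the converse direction that the paper leaves implicit.
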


\begin{proof}
Let $P=[x_0:\cdots:x_{n+2}] \in \PP^{n+2}$.
Since both Fermat hypersurfaces $V(F_1)$ and $V(F_2)$ are smooth, the Jacobian criterion shows that $P$ is a singular point of $X$ if and only if $P\in X$ and there exists $\lambda\in\CC^{*}$ such that
\[
d_1x_i^{d_1-1}
=
\lambda d_2x_i^{d_2-1}
\qquad
(i=0,\dots,n+2).
\]
Equivalently,
\[
x_i=0
\quad\text{or}\quad
\lambda=\frac{d_1}{d_2}x_i^{-m},
\]
where $m=d_2-d_1$. Hence, for any two indices $i,j$ with $x_i,x_j\neq 0$, we have $x_i^m=x_j^m$. Suppose that $P\in \PP^{n+2}$ satisfies this last condition and let
\[
I=\{i : x_i\neq 0\}.
\]
Fix $\xi \in \CC^*$ with $\xi^m$ equal to the common value $x_i^m$ for $i \in I$, and write $x_i = \alpha_i \xi$ where $\alpha_i \in \mu_m$.  Since $d_2 = d_1 + m$ and $\alpha_i^m = 1$, we have $\alpha_i^{d_2} = \alpha_i^{d_1}$, so for $j = 1, 2$,
\[
F_j(P) \;=\; \xi^{d_j} \sum_{i \in I} \alpha_i^{d_1}.
\]
Therefore, a point $P\in\PP^{n+2}$ satisfying the Jacobian condition above is singular on $X$ if and only if
\[
\sum_{i\in I}\alpha_i^{d_1}=0,
\qquad
\alpha_i\in\mu_m.
\]
Consider the homomorphism $\mu_m \to \mu_m$, $\alpha \mapsto \alpha^{d_1}$.  Its image is $\mu_{m'}$. Indeed, $(\alpha^{d_1})^{m'} = \alpha^{d_1 m / \gcd(m, d_1)} = (\alpha^m)^{d_1 / \gcd(m, d_1)} = 1$, and conversely $\gcd(d_1 / \gcd(m, d_1),\, m') = 1$ shows the induced map $\mu_m \to \mu_{m'}$ is surjective.  We conclude that
\[
X \text{ is singular} \;\iff\; \text{there exist } 2 \leq s \leq n+3 \text{ and } \beta_1, \ldots, \beta_s \in \mu_{m'} \text{ with } \sum_{i=1}^{s} \beta_i = 0,
\]
which by definition of $\sigma(m')$ is equivalent to $\sigma(m') \leq n + 3$.  Hence $X$ is smooth if and only if $n + 3 < \sigma(m')$. If $m' = 1$, then $\mu_{m'} = \{1\}$ and the condition $\sum_{i=1}^{s}\beta_i = 0$ becomes a sum of $s$ ones, which never vanishes; hence $\sigma(m') = +\infty$ and $X$ is smooth.  If $m' \geq 2$, \cref{lem:sigma-min-prime} gives $\sigma(m') = p$, the smallest prime divisor of $m'$, and the conclusion follows.
\end{proof}

The combinatorial input required to apply \cref{thm:CI-main} is the rigidity of the pair $(F_1, F_2)$, which we prove in the following lemma.

\begin{lemma}
\label{lem:fermat-rigidity}
Let $F_1, F_2 \in S(V^*)$ be Fermat polynomials of degrees $3 \leq d_1 < d_2$ in $n + 3$ variables, with $n \geq 1$. Then the pair $(F_1, F_2)$ is rigid in the standard basis $\beta$.
\end{lemma}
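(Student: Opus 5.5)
By \cref{rmk:rigidity-r2}, it suffices to show the following: for every nonzero $Q \in S^{m}(V^*)$, with $m = d_2 - d_1 \geq 1$, the product $F_1 Q$ has a monomial whose exponent lies outside
\[
\mathcal{O}(F_2) = \{d_2 e_i : 0 \leq i \leq n+2\},
\]
that is, a monomial that is not a pure power. I will imitate the argument of \cref{ex:fermat-klein-sextic}, choosing a monomial of $Q$ and a term of $F_1$ so that the resulting monomial of $F_1 Q$ cannot be cancelled.

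The key point is to control cancellation in $F_1 Q = \sum_l x_l^{d_1} Q$. Order the exponents of $Q$ lexicographically and let $x^b$ be the leading monomial of $Q$, with coefficient $q \neq 0$. Choose an index $l$ different from the smallest index $i_0$ with $b_{i_0}$ maximal; this is possible because there are $n+3 \geq 4$ variables. The monomial $x_l^{d_1} x^b$ is not a pure power:
\begin{itemize}
\item if $b$ has support of size at least $2$, this is clear;
\item if $b = m e_{i}$, then $l \neq i$ and both $x_l$ and $x_i$ occur.
\end{itemize}
So the remaining issue is to ensure its coefficient in $F_1 Q$ is nonzero.

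That coefficient equals $\sum_{k} c_{b + d_1 e_l - d_1 e_k}$, where $c_\bullet$ denotes the coefficients of $Q$ and the sum runs over those $k$ with $b_k + d_1\delta_{kl} \geq d_1$. Cancellation is therefore possible in principle, and this is the main obstacle. To avoid it, I would make a cleaner choice: take $l$ to be the \emph{largest} index, and let $x^b$ be the lexicographically largest monomial of $Q$ with respect to the order $x_0 > x_1 > \cdots$. Suppose the monomial $x_l^{d_1} x^b$ also arises as $x_k^{d_1} x^{b'}$ with $k \neq l$. Then $b' = b + d_1 e_l - d_1 e_k$, which increases only the last coordinate and decreases coordinate $k < l$. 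Hence $b' >_{\mathrm{lex}} b$ is impossible, so the term $x^{b'}$ does not occur in $Q$, and the coefficient of $x_l^{d_1} x^b$ in $F_1 Q$ is exactly $q \neq 0$.

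One must still check that this monomial is not a pure power when $b = m e_l$, since then $x_l^{d_1} x^b = x_l^{d_2}$. In that case $Q$ has only the monomial $x_l^m$, because it is lex-maximal and every exponent vector of degree $m$ is lex-at-least $m e_l$. Then $F_1 Q = c\, x_l^m F_1$ contains $x_0^{d_1} x_l^m$, which is not a pure power since $l \neq 0$. In every case $F_1 Q$ has a monomial outside $\mathcal{O}(F_2)$, which proves rigidity.
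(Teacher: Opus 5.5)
There is a genuine gap in the step that rules out cancellation. Take $l = n+2$ and let $x^b$ be the lexicographically largest monomial of $Q$ for $x_0 > x_1 > \cdots > x_{n+2}$. A competing exponent $b' = b + d_1 e_{n+2} - d_1 e_k$ with $k < n+2$ first differs from $b$ in coordinate $k$, where it is \emph{smaller}. So $b' <_{\mathrm{lex}} b$, which is perfectly compatible with the maximality of $b$. You correctly observe that $b' >_{\mathrm{lex}} b$ is impossible, but concluding that $x^{b'}$ does not occur in $Q$ would need exactly the opposite inequality. The cancellation really happens. For $d_1 = 3$, $d_2 = 6$ and $Q = x_0^3 - x_{n+2}^3$ you get $b = 3e_0$, and the coefficient of your chosen monomial $x_{n+2}^3 x_0^3$ in $F_1 Q$ is $1 - 1 = 0$.

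Switching to $l = 0$ makes the non-cancellation step correct, since it is then just $\mathrm{LT}(F_1 Q) = \mathrm{LT}(F_1)\,\mathrm{LT}(Q)$ for lex. But the exceptional case becomes $b = m e_0$, and your closing argument collapses: $m e_0$ is the lex-\emph{largest} exponent of degree $m$, so its presence says nothing about the rest of $Q$.

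This case is not a technicality. In any monomial order, the variable $x_j$ whose power leads $F_1$ is the largest variable, so $x_j^m$ leads $Q$ whenever it occurs. For $d_2 = 2d_1$ and $Q = F_1$, the leading monomial of $F_1 Q = F_1^2$ is therefore a pure power in every monomial order. The mixed monomials $x_i^{d_1} x_j^{d_1}$, which are what make the lemma true, occur only in the interior of the support. So no extremal-monomial choice can finish the proof; you need a global argument excluding $F_1 Q = \sum_i a_i x_i^{d_2}$ with $Q \neq 0$.

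The paper supplies one. It regards $F_1 = x_{n+2}^{d_1} + h$ as monic in $x_{n+2}$ and reduces $\sum_i a_i x_i^{d_2}$ modulo $F_1$ via $x_{n+2}^{d_1} \equiv -h$. This gives the polynomial identity $\sum_{i \le n+1} a_i x_i^{d_2} + a_{n+2}(-h)^{q} x_{n+2}^{r} = 0$, where $d_2 = q d_1 + r$ and $0 \le r < d_1$. From it one reads off $a_i = 0$ for all $i$, using a mixed monomial of $h^q$ when $r = 0$. Hence $F_1 Q = 0$ and $Q = 0$.
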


\begin{proof}
The orbit $\mathcal O(F_2)$ consists precisely of the pure-power exponents
$\{d_2 e_i : 0\le i \le n+2\}$. Suppose, for contradiction, that all
monomials of $F_1 Q$ lie in this orbit for some nonzero
$Q\in S^{d_2-d_1}(V^*)$. Then
\begin{align} \label{eq:lemma3.5}
F_1 Q = \sum_{i=0}^{n+2} a_i x_i^{d_2}, \qquad a_i\in\CC.
\end{align}
Write $d_2 = q d_1 + r$ with $0\le r<d_1$. View $\CC[x_0,\ldots,x_{n+2}]$
as $\CC[x_0,\ldots,x_{n+1}][x_{n+2}]$. Since
\[
F_1 = x_{n+2}^{d_1} + h, \qquad h := \sum_{i=0}^{n+1} x_i^{d_1}
\in \CC[x_0,\ldots,x_{n+1}],
\]
is monic in $x_{n+2}$ and $F_1$ divides
$\sum_{i=0}^{n+2} a_i x_i^{d_2}$, the remainder of this latter polynomial modulo $F_1$ must vanish. Using $x_{n+2}^{d_1} \equiv -h \pmod{F_1}$, we have
$x_{n+2}^{d_2} \equiv (-h)^q x_{n+2}^r \pmod{F_1}$. Since the right hand side  has degree $r < d_1$ with respect to $x_{n+2}$, replacing this congruence in \eqref{eq:lemma3.5} yields the following unconditional identity in the polynomial ring
\[
\sum_{i=0}^{n+1} a_i x_i^{d_2} + a_{n+2} (-h)^q x_{n+2}^r = 0
\quad \text{in}\quad  \CC[x_0,\ldots,x_{n+1}][x_{n+2}].
\]
If $r>0$, equating coefficients of $x_{n+2}^0$ and $x_{n+2}^r$ gives
$a_i=0$ for all $i\le n+1$ and $a_{n+2}=0$. If $r=0$, then $d_2 = q d_1$
with $q\ge 2$ (since $d_2>d_1$), and the identity becomes
\[
\sum_{i=0}^{n+1} a_i x_i^{q d_1} = -a_{n+2}(-h)^q
\quad \text{in } \CC[x_0,\ldots,x_{n+1}].
\]
When $a_{n+2}\ne 0$, the right-hand side contains the mixed monomial
$x_0^{d_1(q-1)} x_1^{d_1}$ with nonzero coefficient (since $n\ge 1$
ensures $h$ has at least two terms), which cannot appear on the left.
Hence $a_{n+2}=0$, and then $a_i=0$ for all $i$. Thus $F_1Q=0$, and since
the polynomial ring is a domain, $Q=0$, contradicting the choice of $Q$.
\end{proof}

We can now apply \cref{thm:CI-main} to determine the automorphism group of any Fermat complete intersection.

\begin{theorem}
\label{thm:fermat-CI-aut}
Let $X = V(F_1) \cap V(F_2) \subset \PP^{n+2}$ be a complete intersection of two Fermat hypersurfaces of degrees $3 \leq d_1 < d_2$, with $n \geq 1$ and $(n, d_j) \neq (1, 4)$ for $j = 1, 2$.  Assume either $n \geq 2$, or $n = 1$ and $X$ smooth.  Then
\[
\Aut(X) \;\cong\; (\ZZ/\gcd(d_1, d_2)\ZZ)^{n+2} \rtimes \mathfrak{S}_{n+3},
\]
acting in the standard basis by generalized permutation matrices.
\end{theorem}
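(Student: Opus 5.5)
The plan is to apply \cref{thm:CI-main} with $r=2$ and then compute $\Aut(V(F_1)) \cap \Aut(V(F_2))$ inside $\PGP(V,\beta)$ by hand.

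First we check the hypotheses of \cref{thm:CI-main}. Both Fermat hypersurfaces are smooth by the Jacobian criterion. Since $F_1$ and $F_2$ have different degrees, they are coprime and form a regular sequence, so $X$ is a complete intersection. The sparsity is $\spar(F_1)=2d_1 \geq 6 > 4$. Since $\partial F_1/\partial x_i = d_1 x_i^{d_1-1}$, we have $\vars(\partial F_1/\partial x_i)=\{x_i\}$, so the poset $(\beta^*,\leq_{F_1})$ is trivial. Rigidity is \cref{lem:fermat-rigidity}. The exclusion $(\dim V(F_1),d_1)\neq(2,4)$ is automatic when $n\geq 2$, because $\dim V(F_1)=n+1\geq 3$. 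When $n=1$ it reduces to $d_1\neq 4$, which is part of the hypothesis $(n,d_j)\neq(1,4)$. We conclude that $\Aut(X)=\Aut(V(F_1))\cap\Aut(V(F_2))$, and by \cref{thm:CI-shape} this group lies in $\PGP(V,\beta)$.

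Next we compute the intersection directly. Let $\varphi\in\PGP(V,\beta)$ have lift $\tvarphi^*(x_i)=\mu_i x_{\sigma(i)}$. Then $\tvarphi^*(F_j)=\sum_i \mu_i^{d_j} x_{\sigma(i)}^{d_j}$. This is proportional to $F_j$ if and only if $\mu_i^{d_j}=c_j$ is independent of $i$, and $\sigma$ is arbitrary. Normalize the lift so that $\mu_0=1$. Then $c_1=c_2=1$, so every $\mu_i$ lies in $\mu_{d_1}\cap\mu_{d_2}=\mu_{\gcd(d_1,d_2)}$. Conversely, every such generalized permutation preserves both $F_1$ and $F_2$.

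Finally we identify the group structure. Let $D\subset\PGL(V)$ be the image of the diagonal matrices with entries in $\mu_g$, where $g=\gcd(d_1,d_2)$. Modulo scalars, $D\cong(\mu_g)^{n+3}/\mu_g\cong(\ZZ/g\ZZ)^{n+2}$; equivalently, the normalized representatives with $\mu_0=1$ give this isomorphism, mirroring \cref{prop:aut-fermat}. The group $D$ is normal, the permutation matrices give a complement $\mathfrak{S}_{n+3}$ meeting $D$ trivially, and every element factors as a diagonal matrix times a permutation. This yields the semidirect product with $\mathfrak{S}_{n+3}$ acting by permuting coordinates.

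The only delicate step is the normalization modulo scalars in the last paragraph. One must check that a non-identity permutation matrix is never scalar times a diagonal matrix, so that $D\cap\mathfrak{S}_{n+3}$ is trivial, and that the quotient by scalars gives exactly rank $n+2$. Both follow as in the hypersurface case \cref{prop:aut-fermat}.
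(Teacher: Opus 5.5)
Your proposal is correct and follows the paper's proof. You check $\spar(F_1)=2d_1>4$, the triviality of $(\beta^*,\leq_{F_1})$ and rigidity via \cref{lem:fermat-rigidity}, apply \cref{thm:CI-main}, and then normalize the lift so that $\mu_0=1$, which forces every $\mu_i$ into $\mu_{\gcd(d_1,d_2)}$.

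The only difference is minor. You compute the intersection directly inside $\PGP(V,\beta)$ via \cref{thm:CI-shape} instead of quoting \cref{prop:aut-fermat} for both factors. As a small side benefit, your argument never uses the hypothesis $(n,d_2)\neq(1,4)$.
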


\begin{proof}
A straightforward computation yields $\spar(F_1) = 2 d_1 \geq 6 > 4$ and $(\beta^*, \leq_{F_1})$ is the trivial poset.  The pair $(F_1, F_2)$ is rigid by \cref{lem:fermat-rigidity}.  \cref{thm:CI-main} then gives
\[
\Aut(X) \;=\; \Aut(V(F_1)) \cap \Aut(V(F_2))
\]
as subgroups of $\PGL(V)$.  By \cref{prop:aut-fermat}, both factors are subgroups of $\PGP(V, \beta)$ acting by generalized permutation matrices: the symmetric group $\mathfrak{S}_{n+3}$ permutes variables in both factors, while a diagonal lift $\tvarphi = \diag(\lambda_0, \ldots, \lambda_{n+2})$ preserves $F_j$ if and only if $\lambda_i^{d_j}$ is independent of $i$.  After normalizing $\lambda_0 = 1$, this becomes $\lambda_i^{d_j} = 1$ for all $i$ and $j = 1, 2$, equivalently $\lambda_i^{\gcd(d_1, d_2)} = 1$.  Hence the diagonal part of $\Aut(X)$ is $(\ZZ/\gcd(d_1, d_2)\ZZ)^{n+2}$.
\end{proof}

\section{Klein complete intersections}
\label{sec:klein}

In this section we apply \cref{sec:aut-CI} to complete intersections of two Klein hypersurfaces obtained via the reverse-order construction.  We first recall the Klein hypersurface and its automorphism group, then introduce the reverse-order construction.

\begin{definition}
\label{def:klein}
The \emph{Klein hypersurface} of degree $d$ in $\PP^{n+2}$ is the hypersurface $V(K) \subset \PP^{n+2}$ defined by
\[
K \;=\; x_0^{d-1} x_1 + x_1^{d-1} x_2 + \cdots + x_{n+1}^{d-1} x_{n+2} + x_{n+2}^{d-1} x_0.
\]
\end{definition}

The Klein hypersurface is smooth if $d\geq 3$ by \cite[Lem.~3.1]{GL13}, and its automorphism group is known in nearly all cases.

\begin{proposition}[{\cite[Prop.~3.3, Thm.~3.5]{GALMVL24}}, {\cite{Adl78}}, {\cite{Dol12}}]
\label{prop:aut-klein}
Let $V(K) \subset \PP^{n+2}$ be the Klein hypersurface of degree $d \geq 3$
and dimension $n+1$. Assume $
(n+1,d)\notin \{(1,3),\,(2,3),\,(2,4),\,(3,3)\}
$.
Then
\[
\Aut(V(K)) \;\cong\; (\ZZ/m\ZZ) \rtimes (\ZZ/(n+3)\ZZ),
\qquad
m = \frac{(d-1)^{n+3} - (-1)^{n+3}}{d},
\]
where the cyclic factor of order $m$ is generated by the diagonal automorphism
\[
\varphi_0 \;=\; \diag\bigl(\zeta_m,\, \zeta_m^{1-d},\,
\zeta_m^{(1-d)^2},\, \ldots,\, \zeta_m^{(1-d)^{n+2}}\bigr),
\]
and the cyclic factor of order $n+3$ is generated by the cyclic permutation
of the coordinates
\[
\nu \colon (x_0 : x_1 : \cdots : x_{n+2})
\;\longmapsto\;
(x_1 : x_2 : \cdots : x_{n+2} : x_0).
\]
In the exceptional case $(n,d)=(1,3)$, equivalently when $V(K)$ is the Klein
cubic surface, one has $\Aut(V(K))\cong \mathfrak S_5$. In the exceptional case $(n,d)=(2,3)$, equivalently when $V(K)$ is the Klein
cubic threefold, one has $\Aut(V(K))\cong \operatorname{PSL}_2(\mathbf F_{11})$.
\end{proposition}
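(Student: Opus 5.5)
The plan is to follow the differential method of \cref{thm:galm22-shape}. We first force every automorphism of $V(K)$ to be a generalized permutation matrix, and then solve the resulting combinatorial and diagonal equations directly. Smoothness of $V(K)$ for $d \geq 3$ is \cite[Lem.~3.1]{GL13}.

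\emph{Step 1: checking the hypotheses of \cref{thm:galm22-shape}.} The support of $K$ consists of the exponents $(d-1)e_i + e_{i+1}$, with indices taken mod $n+3$.
\begin{itemize}
\item \textbf{Sparsity.} For non-adjacent $i,j$ the $\ell^1$-distance between two support monomials is $2d$. For adjacent ones it is $(d-1) + (d-2) + 1 = 2d-2$. Hence $\spar(K) = 2d-2$, which is $> 4$ exactly when $d \geq 4$.
\item \textbf{Poset.} We have $\partial K/\partial x_i = (d-1)x_i^{d-2}x_{i+1} + x_{i-1}^{d-1}$, whose variables are $\{x_{i-1}, x_i, x_{i+1}\}$ when $d \geq 3$. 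These three-element cyclic windows are pairwise incomparable as long as $n+3 \geq 4$. So $(\beta^*, \leq_K)$ is a trivial poset.
\end{itemize}
Thus for $d \geq 4$ we get $\Aut(V(K)) \subseteq \PGP(V,\beta)$.

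\emph{Step 2: the permutation part.} Write a lift as $\tvarphi^*(x_i) = \mu_i x_{\sigma(i)}$. Preservation of $K$ up to a scalar forces $\sigma$ to map $M(K)$ onto itself. Since $(d-1)e_i + e_{i+1}$ must go to $(d-1)e_{\sigma(i)} + e_{\sigma(i+1)}$, we get $\sigma(i+1) = \sigma(i)+1$ for all $i$. So $\sigma$ is a power of the rotation $\nu$, and $\nu$ visibly preserves $K$.

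\emph{Step 3: the diagonal part.} For a diagonal lift $\diag(\lambda_0,\dots,\lambda_{n+2})$ the condition is $\lambda_i^{d-1}\lambda_{i+1} = c$ for all $i$. Rescaling the lift, we may take $c = 1$, so $\lambda_{i+1} = \lambda_i^{1-d}$ and hence $\lambda_i = \lambda_0^{(1-d)^i}$. Closing the cycle gives
\[
\lambda_0^{(1-d)^{n+3}} = \lambda_0,
\qquad\text{i.e.}\qquad
\lambda_0^{(1-d)^{n+3}-1} = 1.
\]
It remains to pass to $\PGL$. The scalar matrices satisfying the normalized equations are $\zeta I$ with $\zeta^{d} = 1$. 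Quotienting the group of solutions, which has order $|(1-d)^{n+3}-1| = (d-1)^{n+3} - (-1)^{n+3}$, by this subgroup of order $d$ leaves a cyclic group of order $m$ generated by $\varphi_0$. Together with Step 2 this gives the semidirect product $(\ZZ/m\ZZ) \rtimes (\ZZ/(n+3)\ZZ)$, where $\nu$ acts on $\varphi_0$ by raising it to the power $1-d$.

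\emph{Main obstacle.} The expected hard part is $d = 3$, where $\spar(K) = 4$ and \cref{thm:galm22-shape} does not apply. Here I would first try a refined version of the differential method: use the Hessian/derivative structure of the quadrics $\partial K/\partial x_i$ to still force $\PGP$ (or $\PGT$) shape. Failing that, I would invoke \cite[Thm.~3.5]{GALMVL24} for general $n$. The sporadic low-dimensional cases would be quoted from Klein, Adler \cite{Adl78} and Dolgachev \cite{Dol12}:
\begin{itemize}
\item the Klein plane quartic, $(n+1,d) = (1,4)$, which is smooth of degree $4$ and so is not covered by the exclusion $(\dim Y, d) \neq (2,4)$ of \cref{thm:galm22-shape}; it must be cited separately, since its automorphism group $\operatorname{PSL}_2(\mathbf F_7)$ of order $168$ is strictly larger than $(\ZZ/7\ZZ)\rtimes(\ZZ/3\ZZ)$;
\item the Klein cubic surface, with group $\mathfrak S_5$;
\item the Klein cubic threefold, with group $\operatorname{PSL}_2(\mathbf F_{11})$.
\end{itemize}
In these cases the group is genuinely larger than the diagonal-by-cyclic one, which explains why they are excluded from the statement.
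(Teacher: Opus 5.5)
\paragraph{Comparison with the paper.} Your argument is essentially sound, but it is not the paper's route. The paper gives no argument: it quotes \cite[Prop.~3.3, Thm.~3.5]{GALMVL24}, and Adler and Dolgachev for the two cubic exceptions.

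For $d \geq 4$ you instead derive the result from the differential method already recalled in \cref{thm:galm22-shape}, and each step checks out:
\begin{enumerate}[\rm(i)]
\item Your computations $\spar(K)=2d-2$ and the triviality of $(\beta^*,\leq_K)$ for $n+3\geq 4$ are correct. The excluded case $(n+1,d)=(2,4)$ is exactly the exclusion that theorem needs.
\item Step 2 correctly forces $\sigma$ to be a rotation, since the entry $d-1\geq 2$ is unique in each support exponent.
\item Step 3 correctly identifies the diagonal part with $\mu_{dm}/\mu_d\cong \ZZ/m\ZZ$.
\end{enumerate}
This gives a self-contained proof for $d\geq 4$ from the general machinery, and it isolates why $d=3$ needs more input, namely $\spar(K)=4$. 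For $d=3$, $n\geq 3$, however, your ``refined differential method'' is only a hope. There you end up, like the paper, relying entirely on \cite[Thm.~3.5]{GALMVL24}.

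Your plane-quartic remark is moot. The standing convention $n\geq 1$ gives $n+1\geq 2$, so $(n+1,d)=(1,4)$ never occurs. If it did, the displayed formula would simply be false there (order $21$ versus $168$), not something to be cited separately.

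\paragraph{An incorrect step.} One claim you assert without justification is wrong, and your own Step 3 exposes it. This inaccuracy is inherited from the statement itself. The class of
\[
\varphi_0=\diag\bigl(\zeta_m^{(1-d)^i}\bigr)_i
\]
corresponds to $\lambda_0=\zeta_m=\xi^{d}$, where $\xi$ is a primitive $dm$-th root of unity. It therefore generates $\mu_{dm}/\mu_d$ only when $\gcd(d,m)=1$. Since
\[
m\equiv(-1)^{n}(n+3)\pmod d,
\]
this condition is equivalent to $\gcd(d,n+3)=1$.

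For instance, take $d=5$ and $n=2$, so $m=205$. Then $\varphi_0^k$ is scalar if and only if $205\mid 5k$, so $\varphi_0$ has order $41$ in $\PGL(V)$, not $205$. The isomorphism type $(\ZZ/m\ZZ)\rtimes(\ZZ/(n+3)\ZZ)$ is unaffected. However, the correct generator of the cyclic factor is
\[
\diag\bigl(\xi,\xi^{1-d},\ldots,\xi^{(1-d)^{n+2}}\bigr),
\]
with $\xi$ a primitive $dm$-th root of unity.
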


By \cite[Thm.~3.7]{GL13}, every smooth hypersurface of dimension $n+1$ and degree $d$ admitting an automorphism of prime order $p > (d-1)^{n+1}$ is isomorphic to a Klein hypersurface, $n+3$ is prime, and 
$$p = m = \frac{(d-1)^{n+3} + 1}{d}.$$  
Such primes are called \emph{generalized Wagstaff primes} of base $d-1$, and the Klein hypersurfaces realizing them are said to be of \emph{Wagstaff type}.

The Klein polynomial $K$ admits a natural variant, obtained by reversing the cyclic order of the variables, that defines another smooth Klein hypersurface in $\PP^{n+2}$.  We will study the relationship between these two hypersurfaces and the complete intersections they cut out.

\begin{definition}
\label{def:reverse}
The \emph{reverse-order} Klein hypersurface of degree $d$ in $\PP^{n+2}$ is the hypersurface $V(K') \subset \PP^{n+2}$ defined by
\[
K' \;=\; x_1^{d-1} x_0 + x_2^{d-1} x_1 + \cdots + x_{n+2}^{d-1} x_{n+1} + x_0^{d-1} x_{n+2}.
\]
\end{definition}

Note that $V(K')$ is isomorphic to a Klein hypersurface, so it is smooth and its automorphism group is described by \cref{prop:aut-klein}.  As a subvariety of $\PP^{n+2}$ in the standard basis, $V(K')$ is in general different from $V(K)$.  We show in \cref{prop:reverse} that, under an arithmetic congruence relating their degrees, the standard semidirect product $G = (\ZZ/m\ZZ) \rtimes (\ZZ/(n+3)\ZZ)$ preserves both hypersurfaces.

\begin{proposition}
\label{prop:reverse}
Let $V(K) \subset \PP^{n+2}$ be the Klein hypersurface of dimension $n+1$ and degree $d \geq 3$, and let $V(K') \subset \PP^{n+2}$ be the reverse-order Klein hypersurface of degree $d' \geq 2$. Set
\[
m = \frac{(d-1)^{n+3} - (-1)^{n+3}}{d},
\qquad
G = (\ZZ/m\ZZ) \rtimes (\ZZ/(n+3)\ZZ),
\]
where $G \subseteq \PGL(V)$ is generated by the diagonal automorphism $\varphi_0 = \diag(\zeta_m^{\sigma_0}, \ldots, \zeta_m^{\sigma_{n+2}})$ with $\sigma_j = (1-d)^j$, and the cyclic permutation $\nu \colon (x_0 : \cdots : x_{n+2}) \mapsto (x_1 : \cdots : x_{n+2} : x_0)$. If $(d-1)(d'-1) \equiv 1 \pmod{m}$, then $d' > d$ and
\[
G \subseteq \Aut(V(K)) \cap \Aut(V(K')).
\]
\end{proposition}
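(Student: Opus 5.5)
The proof has three parts: the degree inequality $d'>d$, the invariance of $V(K')$ under $\nu$, and the invariance of $V(K')$ under $\varphi_0$. The inclusion $G \subseteq \Aut(V(K))$ needs no new argument. It is the standard description in \cref{prop:aut-klein}, and in any case one checks directly that $\varphi_0^*$ scales each monomial $x_j^{d-1}x_{j+1}$ by $\zeta_m^{\sigma_j(d-1)+\sigma_{j+1}} = \zeta_m^{\sigma_j(d-1+1-d)} = 1$, with indices mod $n+3$. At the wrap-around term we use $\sigma_{n+3}=(1-d)^{n+3}\equiv 1 \pmod m$, which holds because $dm=(d-1)^{n+3}-(-1)^{n+3}$.

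\textbf{The inequality $d'>d$.} Note first that $m \geq (d-1)^{n+3}/d - 1/d$ is large; for $n\ge 0$ and $d\ge 3$ it exceeds $(d-1)^2$. Since $(d-1)(d-1)\not\equiv 1$ would need checking, I would argue as follows. If $2\le d'\le d$, then $1\le (d-1)(d'-1)\le (d-1)^2 < m+1$. Hence the congruence forces $(d-1)(d'-1)=1$, i.e.\ $d=d'=2$, which contradicts $d\ge 3$. The case $(d-1)^2\equiv 1$ with $d'=d$ is covered by the same bound, since $(d-1)^2-1<m$. The only real work here is the elementary estimate $m>(d-1)^2-1$, which follows from $n+3\ge 3$ and $d\ge3$.

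\textbf{Invariance under $\nu$.} The cyclic permutation $\nu$ sends each monomial $x_{j+1}^{d'-1}x_j$ to $x_{j+2}^{d'-1}x_{j+1}$, cyclically. Therefore $\nu^*K'=K'$.

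\textbf{Invariance under $\varphi_0$.} This is the key step, where the congruence is used. The monomial $x_{j+1}^{d'-1}x_j$ is scaled by $\zeta_m^{e_j}$ with
\[
e_j=(d'-1)\sigma_{j+1}+\sigma_j=\sigma_j\bigl((d'-1)(1-d)+1\bigr)=\sigma_j\bigl(1-(d-1)(d'-1)\bigr)\equiv 0 \pmod m .
\]
The wrap-around monomial $x_0^{d'-1}x_{n+2}$ gives exponent $(d'-1)\sigma_0+\sigma_{n+2}$. Using $\sigma_{n+3}\equiv 1$, i.e.\ $\sigma_0\equiv(1-d)\sigma_{n+2}$, this exponent has the same form and is also $0 \bmod m$. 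Thus $\varphi_0^*K'=K'$, and since the generators of $G$ preserve $V(K')$, so does all of $G$.

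I expect the main obstacle to be only the bookkeeping of indices modulo $n+3$, namely the identity $\sigma_{n+3}\equiv\sigma_0 \pmod m$. Together with the numerical bound on $m$ used for $d'>d$, everything else is direct substitution.
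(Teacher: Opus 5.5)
\textbf{Verdict: the proposal has a flaw, but only in the numerical estimate for $d'>d$.} Everything else follows the paper's proof step for step and is correct. That includes the direct check that $\varphi_0$ and $\nu$ fix every monomial of $K$ and $K'$, and the wrap-around via $(1-d)^{n+3}\equiv 1 \pmod m$. Your factorization $e_j=\sigma_j\bigl(1-(d-1)(d'-1)\bigr)$ is the same as the paper's step of multiplying $(d-1)\sigma_j+\sigma_{j+1}\equiv 0$ by $d'-1$.

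\textbf{The flaw.} The estimate you call ``the only real work'' does not follow from $n+3\ge 3$, and it is false for $n=0$. In that case
\[
m=\frac{(d-1)^3+1}{d}=d^2-3d+3=(d-1)^2-(d-2),
\]
so $m\le (d-1)^2-1$ for every $d\ge 3$. For example, $d=3$ gives $m=3<4$. Both your claims fail here: ``$m$ exceeds $(d-1)^2$ for $n\ge 0$'' and ``$m>(d-1)^2-1$ follows from $n+3\ge 3$.''

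The conclusion itself also fails at $n=0$. For $d=d'=3$ we have $(d-1)(d'-1)=4\equiv 1 \pmod 3$. So $d'>d$ genuinely depends on the paper's standing assumption $n\ge 1$, and you must invoke it.

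\textbf{The fix.} With $n\ge 1$ we have $n+3\ge 4$, and
\[
m\;\ge\;\frac{(d-1)^4-1}{d}\;>\;(d-1)^2 .
\]
The last inequality is $(d-1)^4-d(d-1)^2-1>0$. Writing $a=d-1\ge 2$, this says $a^2(a^2-a-1)>1$, which holds. With this bound your deduction goes through exactly as in the paper: $1\le (d-1)(d'-1)\le (d-1)^2<m$ forces $(d-1)(d'-1)=1$, hence $d=2$, a contradiction.
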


\begin{proof}
We first show that $d' > d$. Since $n \geq 1$, we have
\[
m \;\geq\; \frac{(d-1)^4 - 1}{d} \;>\; (d-1)^2,
\]
where the last inequality is equivalent to $(d-1)^4 - d(d-1)^2 - 1 > 0$, which holds for every $d \geq 3$. As $d' \geq 2$, the integer $(d-1)(d'-1)$ is at least $1$. If $d' \leq d$, then $(d-1)(d'-1) \leq (d-1)^2 < m$, so the congruence $(d-1)(d'-1) \equiv 1 \pmod{m}$ forces $(d-1)(d'-1) = 1$, hence $d = 2$, contradicting $d \geq 3$. Therefore $d' > d$.

For $j = 0, \ldots, n+1$,
\[
(d-1)\sigma_j + \sigma_{j+1} = (d-1)(1-d)^j + (1-d)^{j+1} = 0,
\]
and
\[
(d-1)\sigma_{n+2} + \sigma_0 = (d-1)(1-d)^{n+2} + 1 = 1 - (-1)^{n+3}(d-1)^{n+3},
\]
which is congruent to $0$ modulo $m$ by the definition of $m$; hence $\varphi_0^*(K) = K$. Multiplying each congruence $(d-1)\sigma_j + \sigma_{j+1} \equiv 0 \pmod{m}$ by $d'-1$ and using $(d-1)(d'-1) \equiv 1 \pmod{m}$ yields
\[
\sigma_j + (d'-1)\sigma_{j+1} \equiv 0 \pmod{m},
\]
which is exactly the condition that $\varphi_0$ fixes the reverse-order monomial $x_{j+1}^{d'-1} x_j$. Hence $\varphi_0$ fixes every monomial of $K'$, so $\varphi_0^*(K') = K'$. The cyclic permutation $\nu$ preserves both $V(K)$ and $V(K')$, by a straightforward verification. Since $\varphi_0$ and $\nu$ generate $G$ and both preserve $V(K)$ and $V(K')$, we conclude $G \subseteq \Aut(V(K)) \cap \Aut(V(K'))$.
\end{proof}

We now turn to the complete intersection 
\[
X \;:=\; V(K_1) \cap V(K_2) \;\subset\; \PP^{n+2},
\]
where $V(K_1)$ is a Klein hypersurface of degree $d_1$ and $V(K_2)$ is the reverse-order Klein hypersurface of degree $d_2$, so that
\begin{align*}
K_1 &\;=\; x_0^{d_1-1} x_1 + x_1^{d_1-1} x_2 + \cdots + x_{n+1}^{d_1-1} x_{n+2} + x_{n+2}^{d_1-1} x_0, \\
K_2 &\;=\; x_1^{d_2-1} x_0 + x_2^{d_2-1} x_1 + \cdots + x_{n+2}^{d_2-1} x_{n+1} + x_0^{d_2-1} x_{n+2}.
\end{align*}
Throughout the section we assume $d_1 < d_2$.  The intersection $X$ has dimension $n$, and we refer to $X$ as a \emph{Klein  intersection of multidegree $(d_1, d_2)$}. 

A first observation is that $X$ is always singular in odd dimension.

\begin{example}\label{ex:odd-singular}
Assume $n$ is odd, equivalently $n + 3$ is even, and consider the point
\[
P \;=\; [1 : 0 : 1 : 0 : \cdots : 1 : 0] \in \PP^{n+2}.
\]
A direct computation shows $K_1(P) = K_2(P) = 0$, so $P \in X$, and that $\nabla K_1(P) = [0 : 1 : 0 : 1 : \cdots : 0 : 1] = \nabla K_2(P)$.  Hence the gradients of $K_1$ and $K_2$ are proportional at $P$, so $X$ is singular at $P$.
\end{example}

In even dimension, $X$ may still have singular points, depending on the arithmetic of $d_1$ and $d_2$.

\begin{proposition}
\label{prop:singular-criterion}
Let $k$ be a divisor of $n + 3$ with $d_j \not\equiv 0 \pmod{k}$ for $j = 1, 2$, and let $\zeta_k$ be a primitive $k$-th root of unity.  Set
\[
P \;:=\; [1 : \zeta_k : \zeta_k^2 : \cdots : \zeta_k^{n+2}] \in \PP^{n+2}.
\]
Then $P \in X$, and $X$ is singular at $P$ if and only if
\[
d_1 \equiv d_2 \pmod{k}.
\]
\end{proposition}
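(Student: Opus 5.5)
Write $\zeta = \zeta_k$ and $P_i = \zeta^i$ for $i = 0, \ldots, n+2$, with indices taken modulo $n+3$. This is consistent because $k \mid n+3$, so $\zeta^{n+3} = 1$ and the cyclic wrap-around terms $x_{n+2}^{d_1-1}x_0$ and $x_0^{d_2-1}x_{n+2}$ fit the same pattern. The plan is a direct computation in three steps.

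\textbf{Step 1: $P \in X$.} Every monomial of $K_1$ has the form $x_i^{d_1-1}x_{i+1}$, so it evaluates at $P$ to $\zeta^{(d_1-1)i + i + 1} = \zeta\,\zeta^{d_1 i}$. Summing over $i$ gives
\[
K_1(P) = \zeta \sum_{i=0}^{n+2} (\zeta^{d_1})^i .
\]
Since $k \nmid d_1$, the number $\zeta^{d_1}$ is a nontrivial $k$-th root of unity. The sum runs over $n+3$ consecutive exponents, and $n+3$ is a multiple of $k$, so it splits into complete blocks of the geometric series and vanishes. The same argument applies to $K_2$, whose monomials $x_{i+1}^{d_2-1}x_i$ give $\zeta^{-1}\zeta^{d_2(i+1)}$; it uses $k \nmid d_2$.

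\textbf{Step 2: the gradients.} Compute
\[
\partial K_1/\partial x_j = (d_1-1)x_j^{d_1-2}x_{j+1} + x_{j-1}^{d_1-1}.
\]
At $P$ this equals $(d_1-1)\zeta^{(d_1-1)j+1} + \zeta^{(d_1-1)(j-1)}$. Both terms share the factor $\zeta^{(d_1-1)j}$, so
\[
\partial_j K_1(P) = \zeta^{(d_1-1)j}\bigl((d_1-1)\zeta + \zeta^{1-d_1}\bigr) = c_1\, \zeta^{-j}\zeta^{d_1 j},
\]
with $c_1 = (d_1-1)\zeta + \zeta^{1-d_1}$. For $K_2$, compute
\[
\partial K_2/\partial x_j = (d_2-1)x_j^{d_2-2}x_{j-1} + x_{j+1}^{d_2-1}.
\]
At $P$ this equals $\zeta^{(d_2-1)j}\bigl((d_2-1)\zeta^{-1} + \zeta^{d_2-1}\bigr) = c_2\,\zeta^{-j}\zeta^{d_2 j}$. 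The whole computation uses only the periodicity of indices modulo $n+3$, which is where $k \mid n+3$ enters.

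\textbf{Step 3: comparing the gradients.} Both $V(K_1)$ and $V(K_2)$ are smooth, so neither gradient vanishes at $P$; in particular $c_1, c_2 \neq 0$. Then $X$ is singular at $P$ exactly when the two gradient vectors are proportional. By Step 2 this means $\zeta^{(d_1-d_2)j}$ is independent of $j$. Taking $j = 0$ and $j = 1$, this holds if and only if $\zeta^{d_1-d_2} = 1$, that is, $d_1 \equiv d_2 \pmod{k}$.

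The only delicate point is the nonvanishing of $c_1$ and $c_2$, which is needed for the reverse implication. One could check directly that $(d-1)\zeta^{d} + 1 \neq 0$ since $d-1 \geq 2$. However, smoothness of each hypersurface already forces a nonzero gradient, and since $\zeta^{-j}\zeta^{d_i j}$ never vanishes, this gives $c_i \neq 0$ immediately. The rest is bookkeeping of exponents modulo $k$.
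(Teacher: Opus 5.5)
Your proof is correct and follows essentially the same route as the paper. Both evaluate $K_1, K_2$ at $P$ as vanishing geometric sums of nontrivial $k$-th roots of unity, factor $\partial_j K_i(P)$ as a constant $c_i$ times $\zeta_k^{j(d_i-1)}$, and reduce proportionality of the gradients to $\zeta_k^{d_1-d_2}=1$. Your explicit check that $c_1, c_2 \neq 0$ (via smoothness of $V(K_1), V(K_2)$, or directly since $d_i - 1 \geq 2$) is a point the paper leaves implicit when it writes $\lambda$ as a ratio: $c_2 \neq 0$ is needed for that ratio to make sense, and $c_1 \neq 0$ for the ``only if'' direction.
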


\begin{proof}
Throughout the proof, all index arithmetic is cyclic modulo $n + 3$.  Using $d_j \not\equiv 0 \pmod{k}$, the sum of $k$-th roots of unity gives
\[
K_1(P) \;=\; \frac{n+3}{k}\, \zeta_k \sum_{j=0}^{k-1} \zeta_k^{j d_1} \;=\; 0,
\qquad
K_2(P) \;=\; \frac{n+3}{k}\, \zeta_k^{-1} \sum_{j=1}^{k} \zeta_k^{j d_2} \;=\; 0,
\]
so $P \in X$.  A point of $X$ is singular if and only if there exists $\lambda \in \CC$ with $\nabla K_1(P) = \lambda\, \nabla K_2(P)$, i.e., for every $j$,
\[
\lambda \;=\; \frac{(d_1 - 1) x_j^{d_1 - 2} x_{j+1} + x_{j-1}^{d_1 - 1}}{(d_2 - 1) x_j^{d_2 - 2} x_{j-1} + x_{j+1}^{d_2 - 1}}.
\]
Substituting $P$ yields
\[
\lambda \;=\; \zeta_k^{j(d_1 - d_2)} \cdot \frac{(d_1 - 1) \zeta_k + \zeta_k^{1 - d_1}}{(d_2 - 1) \zeta_k^{-1} + \zeta_k^{d_2 - 1}}.
\]
This is independent of $j$ if and only if $d_1 \equiv d_2 \pmod{k}$.
\end{proof}

Combining the two preceding statements yields necessary conditions for $X$ to be smooth.

\begin{corollary}
\label{cor:smooth-necessary}
If $X = V(K_1) \cap V(K_2) \subset \PP^{n+2}$ is smooth, then:
\begin{enumerate}[\rm(i)]
\item $n$ is even (equivalently, $n + 3$ is odd); and
\item for every divisor $k > 1$ of $n + 3$ with $d_j \not\equiv 0 \pmod{k}$ for $j = 1, 2$, one has $d_1 \not\equiv d_2 \pmod{k}$.
\end{enumerate}
\end{corollary}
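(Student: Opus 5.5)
The plan is to obtain the corollary directly from the two preceding statements, arguing by contraposition in each part.

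For (i), suppose $n$ is odd. Then $n+3$ is even, and \cref{ex:odd-singular} gives the explicit point $P = [1:0:1:0:\cdots:1:0]$. I would verify its claims briefly. Every monomial of $K_1$ and of $K_2$ involves two cyclically adjacent variables, one with even index and one with odd index. Since $n+3$ is even, the wrap-around pair $(x_{n+2}, x_0)$ also has one odd and one even index. Every monomial therefore contains an odd-indexed variable, which vanishes at $P$, so $P \in X$.

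For the gradients, $\partial K_1/\partial x_i$ at $P$ is the sum of $(d_1-1)x_i^{d_1-2}x_{i+1}$ and $x_{i-1}^{d_1-1}$. For even $i$, the first term vanishes because $x_{i+1}=0$, and when $d_1 \geq 3$ the second vanishes because $x_{i-1}=0$. For odd $i$, the value is $x_{i-1}^{d_1-1} = 1$. The analogous computation for $K_2$ (with $d_2 \geq 3$) gives $\partial K_2/\partial x_i(P) = x_{i+1}^{d_2-1}$, which is $1$ for odd $i$ and $0$ otherwise. The two gradients are equal and nonzero, so the Jacobian matrix of $(K_1,K_2)$ has rank $1$ at $P$, and $X$ is singular there. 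This contradicts smoothness, so $n$ must be even.

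For (ii), suppose $k>1$ divides $n+3$, that $d_j \not\equiv 0 \pmod k$ for $j=1,2$, and that $d_1 \equiv d_2 \pmod k$. Then \cref{prop:singular-criterion} says the point $P = [1:\zeta_k:\cdots:\zeta_k^{n+2}]$ lies on $X$ and that $X$ is singular at $P$. This again contradicts smoothness, which proves (ii).

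The only point needing care is the implicit assumption $d_1 \geq 3$ in the gradient check of \cref{ex:odd-singular}. This holds in the Klein setting: the standing hypothesis is $d_1 \geq 3$, and $d_2 > d_1$. Beyond that, the argument is immediate from the cited results.
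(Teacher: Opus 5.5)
Your proposal is correct and is exactly the paper's argument: (i) is the explicit singular point of \cref{ex:odd-singular} and (ii) is \cref{prop:singular-criterion}, each applied contrapositively. One small bookkeeping note on your re-verification of the example: $d_1 \geq 3$ is what kills the term $(d_1-1)x_i^{d_1-2}x_{i+1}$ for odd $i$, not anything for even $i$ (there $d_1 \geq 2$ suffices). Even for $d_1 = 2$ the odd-index partials equal $2$, so the two gradients remain proportional and your caveat is harmless.
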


\begin{proof}
Item (i) is \cref{ex:odd-singular}; item (ii) follows from \cref{prop:singular-criterion}.
\end{proof}

\begin{remark}
\label{rmk:smoothness-computational}
The arithmetic conditions in \cref{cor:smooth-necessary} fall short of sufficiency, and a purely algebraic proof of smoothness appears to be quite delicate.  When $n + 3 = p^k$ is a prime power, additional singular configurations arise from congruences modulo $p^k$ involving both $d_1$ and $d_2$, which we have detected computationally but not been able to characterize in closed form.  We have verified smoothness computationally for a range of small cases, summarized in \cref{tab:smoothness-data} below.
\end{remark}

\begin{table}[ht]
\renewcommand{\arraystretch}{1.2}
\begin{tabular*}{\linewidth}{@{\extracolsep{\fill}}clc@{}}
\toprule
$n + 3$ & range of $(d_1, d_2)$ tested & smoothness \\
\midrule
$5$ & $3 \leq d_1 \leq d_2 \leq 10$ & smooth except $(3,3), (4,4), (3,8), (4,9), (6,6), (7,7), (8,8), (9,9)$ \\
$7$ & $(3,3),\, (3,4),\, (3,5),\, (4,4)$ & smooth except $(3,3),\, (4,4)$ \\
\bottomrule
\end{tabular*}
\caption{Computational verification of smoothness for Klein complete intersections.}
\label{tab:smoothness-data}
\end{table}

We now turn to the automorphism group of a Klein complete intersection.  The combinatorial input required to apply \cref{thm:CI-main} is the rigidity of the pair $(K_1, K_2)$, which in this setting reduces to a direct comparison of monomial supports.

\begin{lemma}
\label{lem:klein-rigidity}
Let $K_1$ be the Klein polynomial of degree $d_1$ and let $K_2$ be the reverse-order Klein polynomial of degree $d_2$ in $n+3$ variables, with $n\geq 1$ and $3\leq d_1<d_2$. Then the pair $(K_1,K_2)$ is rigid in the standard basis.
\end{lemma}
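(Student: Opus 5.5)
The plan is to argue by contradiction using leading monomials, and then to exhibit one specific monomial of $K_1Q$ that has only one possible source and lies outside the orbit. By \cref{rmk:rigidity-r2}, it suffices to show the following. For every nonzero $Q \in S^{e}(V^*)$ with $e := d_2 - d_1 \geq 1$, the product $K_1 Q$ has a monomial outside
\[
\mathcal{O}(K_2) \;=\; \{(d_2-1)e_i + e_j : i \neq j\}.
\]
This set consists of exponents with support of size exactly $2$, one entry equal to $d_2 - 1$ and the other equal to $1$. So suppose, for contradiction, that every monomial of $K_1 Q$ lies in $\mathcal{O}(K_2)$.

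\emph{Step 1: $Q$ contains $x_0^e$.} Fix the lexicographic order with $x_0 > x_1 > \cdots > x_{n+2}$. The leading monomial of $K_1$ is $x_0^{d_1-1}x_1$. The only other monomial of $K_1$ involving $x_0$ is $x_{n+2}^{d_1-1}x_0$, which has $x_0$-exponent $1 < d_1 - 1$. Let $x^b$ be the leading monomial of $Q$. Since leading monomials are multiplicative, $x^{b}\, x_0^{d_1-1}x_1$ appears in $K_1Q$ with nonzero coefficient, so its exponent $(d_1-1)e_0 + e_1 + b$ lies in $\mathcal{O}(K_2)$.
\begin{itemize}
\item[] The exponents of $x_0$ and of $x_1$ are both positive, so the support must be $\{0,1\}$.
\item[] The $x_0$-exponent is at least $d_1 - 1 \geq 2$, so it cannot be the entry equal to $1$. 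Hence the exponent equals $(d_2-1)e_0 + e_1$.
\end{itemize}
This forces $b = e\,e_0$, i.e.\ $x_0^e$ appears in $Q$ with some coefficient $c \neq 0$.

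\emph{Step 2: a monomial that cannot cancel.} Consider $\mathbf{m} := x_0^{e+1}x_{n+2}^{d_1-1}$. It arises in $K_1 Q$ from the product $x_{n+2}^{d_1-1}x_0 \cdot x_0^e$, and we claim this is its only source. A term $x^{a}\cdot x^{b'}$ with $x^a \in M(K_1)$ and $x^{b'}$ a monomial of $Q$ can equal $\mathbf{m}$ only if $x^a$ divides $\mathbf{m}$, so $x^a$ involves only $x_0$ and $x_{n+2}$. The monomials of $K_1$ are the $x_i^{d_1-1}x_{i+1}$ with indices taken modulo $n+3$. Since $n+3 \geq 4$, the only one supported in $\{x_0, x_{n+2}\}$ is $x_{n+2}^{d_1-1}x_0$; the monomial $x_0^{d_1-1}x_1$ involves $x_1$. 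Hence the coefficient of $\mathbf{m}$ in $K_1Q$ is exactly $c \neq 0$.

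\emph{Step 3: conclusion.} The exponent of $\mathbf{m}$ has support $\{0, n+2\}$ with entries $e+1 \geq 2$ and $d_1 - 1 \geq 2$. Neither entry equals $1$, so $\mathbf{m} \notin \mathcal{O}(K_2)$. This contradicts our assumption, and rigidity follows.

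The only delicate point is the uniqueness of the source of $\mathbf{m}$ in Step 2. It uses $n+3 \geq 4$, so that $x_{n+2}^{d_1-1}x_0$ is the sole monomial of $K_1$ supported in $\{x_0, x_{n+2}\}$. It also uses $d_1 \geq 3$, both there and in Step 1 to separate the exponent $d_1 - 1$ from $1$. I expect no further obstacle.
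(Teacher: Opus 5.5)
Your proof is correct and follows essentially the same route as the paper. Both arguments use the lexicographic leading-term argument to force the top monomial of $Q$ to be $x_0^{d_2-d_1}$, and then exhibit a second witness monomial of $K_1Q$ with a unique source. The only difference is the witness: the paper uses $x_2^{d_1-1}x_3\cdot x_0^{d_2-d_1}$, whose support has size three, while your $x_{n+2}^{d_1-1}x_0\cdot x_0^{d_2-d_1}$ has support of size two with no entry equal to $1$, which makes its uniqueness of source immediate from the supports of the monomials of $K_1$.
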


\begin{proof}
Throughout the proof to simplify the notation, we consider all the indices $i$ in  $x_i$ to be reduced modulo $n+3$. The orbit
$\mathcal O(K_2)$ consists precisely of those exponent vectors of degree $d_2$
with support of size $2$ and nonzero entries $d_2-1$ and $1$.  For
$c\in\ZZ_{\geq 0}^{n+3}$ and $k\in\{0,\ldots,n+2\}$, write
\[
v_k(c):=(d_1-1)e_k+e_{k+1}+c,
\]
the exponent of the monomial $x_k^{d_1-1}x_{k+1}x^c$.

We first record the following elementary observation. If $x^c$ has degree
$d_2-d_1$, then
\[
v_k(c)\in \mathcal O(K_2)
\quad\Longleftrightarrow\quad
c=(d_2-d_1)e_k.
\]
Indeed, the $k$-th coordinate of $v_k(c)$ is at least $d_1-1\geq 2$, so it
cannot be the coordinate with value $1$.  Hence, if $v_k(c)\in\mathcal O(K_2)$,
the $k$-th coordinate must be $d_2-1$, which forces $c_k=d_2-d_1$, and since $\sum_i c_i=d_2-d_1$, all other coordinates of $c$ are zero.  The converse
is immediate.

Now let
\[
Q=\sum_c q_c x^c\in S^{d_2-d_1}(V^*)\setminus\{0\}.
\]
We endow $\ZZ_{\geq 0}^{n+3}$ with the lexicographic order induced by
$x_0>x_1>\cdots>x_{n+2}$, and let $c^*$ be the lexicographically largest
element of $M(Q)$.

First assume that $c^*\neq (d_2-d_1)e_0$.  By the observation above,
$v_0(c^*)\notin\mathcal O(K_2)$.  We claim that the monomial with exponent
$v_0(c^*)$ occurs in $K_1Q$ with nonzero coefficient.  Indeed, suppose that
$v_k(c)=v_0(c^*)$ for some $c\in M(Q)$ and some $k$.  If $k=0$, then
$c=c^*$.  If $k\neq 0$, then
\[
c
=
c^*+(d_1-1)e_0+e_1-(d_1-1)e_k-e_{k+1}.
\]

Looking at the $0$-th coordinate, we get
\[
c_0=
\begin{cases}
c_0^*+d_1-1, & \text{if } k\neq n+2,\\
c_0^*+d_1-2, & \text{if } k=n+2.
\end{cases}
\]
Since $d_1\geq 3$, in both cases $c_0>c_0^*$, so $c$ is lexicographically
larger than $c^*$, a contradiction.  Therefore the only contribution to the
coefficient of $x^{v_0(c^*)}$ comes from the product
$x_0^{d_1-1}x_1\cdot x^{c^*}$, and this coefficient is $q_{c^*}\neq 0$.
Thus $K_1Q$ has a monomial whose exponent lies outside $\mathcal O(K_2)$.

It remains to consider the case
\[
c^*=(d_2-d_1)e_0.
\]
Then $v_0(c^*)\in\mathcal O(K_2)$, so instead consider
\[
w:=v_2(c^*)=(d_2-d_1)e_0+(d_1-1)e_2+e_3.
\]
This vector has support of size $3$, because $n+3\geq 4$, and hence
$w\notin\mathcal O(K_2)$.  We claim that $x^w$ occurs in $K_1Q$ with nonzero coefficient. Suppose $v_k(c)=w$ for some $c\in M(Q)$. Since the $k$-th coordinate of $v_k(c)$ is at least $d_1-1$, the $k$-th coordinate of $w$ must be at least $d_1-1$, so $k\in\{0,2\}$ (with $k=0$ possible only when $d_2\geq 2d_1-1$). If $k=0$, then the $1$-st coordinate of $v_0(c)$ is at least $1$, while $w_1=0$, contradicting $v_0(c)=w$. Hence $k=2$, and then necessarily 
\[
c=w-(d_1-1)e_2-e_3=(d_2-d_1)e_0=c^*.
\]
Thus the only contribution to the coefficient of $x^w$ comes from
$x_2^{d_1-1}x_3\cdot x^{c^*}$, and this coefficient is $q_{c^*}\neq 0$.

In all cases, $K_1Q$ has a monomial whose exponent lies outside
$\mathcal O(K_2)$.  Therefore the pair $(K_1,K_2)$ is rigid.
\end{proof}

For $d_1 \geq 4$, a straightforward computation yields $\spar(K_1) = 2(d_1 - 1) \geq 6 > 4$,
and shows that $(\beta^*, \leq_{K_1})$ is the trivial poset whenever $n + 3 \geq 4$.  Both $V(K_1)$ and $V(K_2)$ are smooth by
\cite[Lem.~3.1]{GL13}, and the pair $(K_1, K_2)$ is rigid by
\cref{lem:klein-rigidity}. Hence \cref{thm:CI-main} applies to every Klein complete intersection of
multidegree $(d_1,d_2)$ with $4 \le d_1 < d_2$ and $(n,d_j)\neq(1,4)$
for $j=1,2$. Consequently,
\[
\Aut(X)=\Aut(V(K_1))\cap\Aut(V(K_2))
\]
as subgroups of $\PGL(V)$. Furthermore, if $X$ is smooth, then
\cref{prop:order} implies that, for each $j=1,2$,
\[
(1-d_j)^{\ell_j}\equiv 1 \pmod q
\]
for some $\ell_j\in\{1,\ldots,n+3\}$.  

When the multidegree $(d_1, d_2)$ satisfies the congruence of \cref{prop:reverse}, a sharper conclusion is available: the automorphism group of $X$ coincides with the full automorphism group of the lower-degree Klein hypersurface, and is therefore described explicitly by \cref{prop:aut-klein}. The proof does not rely on \cref{thm:CI-main} and goes directly through the inclusion $\Aut(V(K_1)) \subseteq \Aut(V(K_2))$ given by \cref{prop:reverse}, which allows us to relax the lower bound on $d_1$ to $d_1 \geq 3$.

\begin{theorem} 
\label{thm:klein-CI-wagstaff}
Let $X = V(K_1) \cap V(K_2) \subset \PP^{n+2}$ be a Klein complete intersection of dimension $n \geq 2$ and multidegree $(d_1, d_2)$ with $3 \leq d_1 < d_2$ and $(n, d_1) \neq (2, 3)$. Assume that
\[
(d_1 - 1)(d_2 - 1) \equiv 1 \pmod{m},
\qquad
m \;=\; \frac{(d_1 - 1)^{n+3} - (-1)^{n+3}}{d_1}.
\]
Then
\[
\Aut(X) \;\cong\; \Aut(V(K_1)) \;\cong\; (\ZZ/m\ZZ) \rtimes (\ZZ/(n+3)\ZZ).
\]
\end{theorem}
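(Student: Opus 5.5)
The plan is to prove two inclusions inside $\PGL(V)$ and then identify the group through \cref{prop:aut-klein}.

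First I check the hypotheses of the earlier results. Since $n \geq 2$, the complete intersection $X$ has dimension at least two, so \cref{prop:AutL} gives $\Aut(X) = \AutL(X)$; this needs only $3 \leq d_1 < d_2$. \cref{lem:F1-preserved} then gives $\Aut(X) \subseteq \Aut(V(K_1))$.

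Next I identify $\Aut(V(K_1))$. The hypersurface $V(K_1) \subset \PP^{n+2}$ has dimension $n+1 \geq 3$. The excluded cases of \cref{prop:aut-klein} are $(n+1,d_1) \in \{(1,3),(2,3),(2,4),(3,3)\}$. Only $(3,3)$ is possible when $n \geq 2$, and it corresponds to $(n,d_1) = (2,3)$, which the statement excludes. Hence
\[
\Aut(V(K_1)) = G \cong (\ZZ/m\ZZ) \rtimes (\ZZ/(n+3)\ZZ),
\]
generated by $\varphi_0$ and $\nu$ exactly as in \cref{prop:reverse}, with the same $m$.

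For the reverse inclusion I apply \cref{prop:reverse} with $d = d_1$ and $d' = d_2$. The congruence $(d_1-1)(d_2-1) \equiv 1 \pmod m$ is precisely its hypothesis, so $G \subseteq \Aut(V(K_1)) \cap \Aut(V(K_2))$. Each element of $G$ is a linear automorphism preserving both hypersurfaces, so it preserves $X = V(K_1) \cap V(K_2)$. This gives $\Aut(V(K_1)) = G \subseteq \Aut(X)$. Combined with the first inclusion, $\Aut(X) = \Aut(V(K_1)) \cong (\ZZ/m\ZZ)\rtimes(\ZZ/(n+3)\ZZ)$.

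One point to watch is that the restriction map $\Aut(V(K_1)) \to \Aut(X)$ is injective, so that the equality holds as groups and not just as subgroups of $\PGL(V)$. This follows because $\Aut(X) = \AutL(X)$ is regarded as a subgroup of $\PGL(V)$ via the unique linear extension; the isomorphism in \eqref{eq:rest} shows $X$ is nondegenerate, so a projective transformation fixing $X$ pointwise is the identity.

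I expect the main obstacle to be bookkeeping rather than substance. One has to confirm that the exceptional list of \cref{prop:aut-klein} is fully excluded by $n \geq 2$ and $(n,d_1) \neq (2,3)$, and that the generators $\varphi_0$, $\nu$ used there agree with those in \cref{prop:reverse}. Note that no rigidity, sparsity or poset condition is needed, which is why $d_1 = 3$ is allowed.
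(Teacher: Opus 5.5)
Your proposal is correct and follows the paper's proof: $\Aut(X)\subseteq\Aut(V(K_1))$ by \cref{lem:F1-preserved}, $\Aut(V(K_1))=G$ by \cref{prop:aut-klein} once the exceptional cases are excluded, and $G\subseteq\Aut(V(K_2))$ by \cref{prop:reverse}, which gives the reverse inclusion. Your extra bookkeeping only makes explicit what the paper leaves implicit: you invoke \cref{prop:AutL} so that \cref{lem:F1-preserved}, which is stated for $\AutL(X)$, applies, and you check injectivity of restriction (where, strictly, one needs that $X$ is connected with $H^0(X,\mathcal{O}_X)=\CC$, not mere nondegeneracy, but this holds for complete intersections of positive dimension).
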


\begin{proof}
By \cref{lem:F1-preserved}, every $\varphi \in \Aut(X)$ preserves $V(K_1)$, so $\Aut(X) \subseteq \Aut(V(K_1))$. The hypotheses on $(n, d_1)$ ensure that \cref{prop:aut-klein} applies to $V(K_1)$, giving $\Aut(V(K_1)) = G$ with $G = (\ZZ/m\ZZ) \rtimes (\ZZ/(n+3)\ZZ)$. By \cref{prop:reverse}, the congruence $(d_1 - 1)(d_2 - 1) \equiv 1 \pmod{m}$ gives $G \subseteq \Aut(V(K_2))$. Hence every automorphism of $V(K_1)$ preserves the intersection $X = V(K_1) \cap V(K_2)$, so $\Aut(V(K_1)) \subseteq \Aut(X)$. Thus $\Aut(X) = \Aut(V(K_1))$, and the stated group isomorphism follows from \cref{prop:aut-klein}.
\end{proof}

In contrast to the maximal case described in \cref{thm:klein-CI-wagstaff}, the automorphism group of $X$ attains its minimal possible size when $m_1$ and $m_2$ are coprime, in which case the diagonal subgroup is trivial.

\begin{corollary}
\label{cor:minimal-automorphism-group}
Let $X = V(K_1) \cap V(K_2) \subset \PP^{n+2}$ be a Klein complete intersection of dimension $n \geq 1$ and multidegree $(d_1, d_2)$ with $4 \leq d_1 < d_2$ and $(n, d_j) \neq (1, 4)$ for $j = 1, 2$. Assume either $n \geq 2$, or $n = 1$ and $X$ smooth. For each $j \in \{1, 2\}$, let
\[
m_j = \frac{(d_j - 1)^{n+3} - (-1)^{n+3}}{d_j}.
\]
If $\gcd(m_1, m_2) = 1$, then
\[
\Aut(X) \cong \ZZ/(n+3)\ZZ.
\]
\end{corollary}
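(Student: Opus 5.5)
The plan is to invoke \cref{thm:CI-main} and then compute the intersection $\Aut(V(K_1)) \cap \Aut(V(K_2))$ explicitly, using the description in \cref{prop:aut-klein}.

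\textbf{Step 1: reduce to an intersection of two known groups.} The discussion preceding \cref{thm:klein-CI-wagstaff} shows that the hypotheses of \cref{thm:CI-main} hold here. Indeed, $d_1 \geq 4$ gives $\spar(K_1) = 2(d_1-1) > 4$, the poset $(\beta^*, \leq_{K_1})$ is trivial, $V(K_1)$ is smooth, and the pair $(K_1,K_2)$ is rigid by \cref{lem:klein-rigidity}. Hence
\[
\Aut(X) = \Aut(V(K_1)) \cap \Aut(V(K_2)) \subseteq \PGP(V,\beta).
\]
The exceptional pairs of \cref{prop:aut-klein} are excluded because $d_j \geq 4$ and $(n,d_j) \neq (1,4)$. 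So $\Aut(V(K_1)) = \langle \varphi_1 \rangle \rtimes \langle \nu \rangle$, where $\varphi_1$ is the diagonal automorphism of order $m_1$ built from $\zeta_{m_1}$ and powers of $1-d_1$. Applying \cref{prop:aut-klein} after the relabeling $x_i \mapsto x_{-i}$, which carries $K'$ to a Klein polynomial, gives $\Aut(V(K_2)) = \langle \varphi_2 \rangle \rtimes \langle \nu^{-1} \rangle = \langle \varphi_2 \rangle \rtimes \langle \nu \rangle$, with $\varphi_2$ diagonal of order $m_2$. The cyclic permutation $\nu$ lies in both groups, so $\langle \nu \rangle \subseteq \Aut(X)$.

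\textbf{Step 2: show the intersection meets the diagonal torus trivially.} Let $g \in \Aut(X)$ and write $g = \varphi_1^a \nu^b$. Then $g$ also equals $\varphi_2^c \nu^{b'}$. Taking the permutation parts of these generalized permutation matrices gives $\nu^b = \nu^{b'}$, since the image of $\PGP$ in $\mathfrak S_{n+3}$ is well defined. Hence $\varphi_1^a = \varphi_2^c$ lies in the diagonal subgroup $D_1 \cap D_2$, where $D_j = \langle \varphi_j \rangle$ has order $m_j$. The order of $\varphi_1^a$ divides both $m_1$ and $m_2$, hence divides $\gcd(m_1,m_2) = 1$, so $\varphi_1^a$ is trivial. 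Thus $\Aut(X) = \langle \nu \rangle \cong \ZZ/(n+3)\ZZ$, using that $\nu$ has order exactly $n+3$ in $\PGL(V)$.

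\textbf{Main obstacle.} The delicate point is the bookkeeping in Step 2. One must check that the diagonal subgroup of $\Aut(V(K_j))$ is exactly $D_j$, that is, that $\Aut(V(K_j)) \cap \{\text{diagonal}\} = D_j$. It suffices that $\langle \nu \rangle$ meets the diagonal matrices trivially in $\PGL(V)$, which holds because a nontrivial power of $\nu$ is a non-identity permutation matrix. One must also confirm that the semidirect product in \cref{prop:aut-klein} is an internal decomposition inside $\PGL(V)$ with $D_j$ normal. With this in place, the argument is a direct comparison of orders.
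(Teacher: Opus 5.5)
Your proposal is correct and follows the paper's route: reduce to $\Aut(V(K_1)) \cap \Aut(V(K_2))$ via \cref{thm:CI-main} and \cref{lem:klein-rigidity}, then use \cref{prop:aut-klein} and the fact that $\nu \in \Aut(X)$. The one difference is the last step: the paper uses Lagrange's theorem to get that $|\Aut(X)|$ divides $(n+3)\gcd(m_1,m_2) = n+3$. That avoids the diagonal/permutation bookkeeping you flag as the main obstacle, while your explicit comparison of diagonal and permutation parts is also valid and reaches the same conclusion $\Aut(X) = \langle \nu \rangle$.
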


\begin{proof}
By $d_1 \geq 4$ and $(n, d_j) \neq (1, 4)$, neither $V(K_1)$ nor $V(K_2)$ falls into the exceptional cases of \cref{prop:aut-klein}. Since $V(K_2)$ is isomorphic to a standard Klein hypersurface, \cref{prop:aut-klein} applies to both components, yielding $|\Aut(V(K_j))| = m_j(n+3)$ for $j = 1, 2$. Moreover, $\spar(K_1) = 2(d_1 - 1) \geq 6 > 4$, the poset $(\beta^*, \leq_{K_1})$ is trivial, and the pair $(K_1, K_2)$ is rigid by \cref{lem:klein-rigidity}. Thus \cref{thm:CI-main} applies and gives $\Aut(X) = \Aut(V(K_1)) \cap \Aut(V(K_2))$. By Lagrange's theorem, the order $L = |\Aut(X)|$ divides both group orders, so
\[
L \mid \gcd\big(m_1(n+3),\, m_2(n+3)\big) = (n+3)\gcd(m_1, m_2).
\]
Since $\gcd(m_1, m_2) = 1$, we deduce that $L$ divides $n+3$. On the other hand, the cyclic permutation $\nu$ of order $n+3$ preserves both $K_1$ and $K_2$, so $\Aut(X)$ contains a cyclic subgroup of order $n+3$, whence $L \geq n+3$. Therefore $L = n+3$ and $\Aut(X) \cong \ZZ/(n+3)\ZZ$.
\end{proof}

\begin{example}
\label{ex:klein-CI-extremes}
To illustrate the two geometric extremes governing the automorphism groups of Klein complete intersections, let $n = 2$ and $d_1 = 4$, so that $V(K_1) \subset \PP^4$ is the Klein quartic threefold.  Then
\[
m_1 = \frac{(4-1)^5 - (-1)^5}{4} = 61,
\]
a generalized Wagstaff prime of base $3$.  To observe the maximal automorphism group, we seek a degree $d_2$ satisfying the congruence $(d_1 - 1)(d_2 - 1) \equiv 1 \pmod{m_1}$ of \cref{thm:klein-CI-wagstaff}.  The inverse of $3$ modulo $61$ is $41$, yielding $d_2 = 42$.  Let $X = V(K_1) \cap V(K_2) \subset \PP^4$ be the corresponding Klein complete intersection of multidegree $(4, 42)$.  The hypotheses of \cref{thm:klein-CI-wagstaff} hold.  Consequently, the automorphism group is maximal:
\[
\Aut(X) \cong (\ZZ/61\ZZ) \rtimes (\ZZ/5\ZZ),
\]
a group of order $305$.  In contrast, keeping $V(K_1)$ fixed, consider the intersection with the reverse-order Klein polynomial of degree $d_2 = 5$.  For the multidegree $(4, 5)$,
\[
m_2 = \frac{(5-1)^5 - (-1)^5}{5} = 205,
\]
and $\gcd(m_1, m_2) = \gcd(61, 205) = 1$.  The conditions \cref{cor:minimal-automorphism-group} are again straightforward to check, and so, the diagonal subgroup is entirely obstructed, and the automorphism group collapses to its minimal bound:
\[
\Aut(X) \cong \ZZ/5\ZZ.
\]
\end{example}

\begin{remark}
\label{rmk:main-thm-coverage}
The case $(n, d_1) = (2, 3)$ falls outside both \cref{thm:klein-CI-wagstaff} and \cref{thm:CI-main}. On one hand, $\spar(K_1) = 2(d_1 - 1) = 4$ is not greater than $4$, so \cref{thm:CI-main} does not apply. On the other hand, the Klein cubic threefold $V(K_1) \subset \PP^4$ has automorphism group $\operatorname{PSL}_2(\mathbf{F}_{11})$ \cite{Adl78}, which strictly contains the semidirect product $(\ZZ/11\ZZ) \rtimes (\ZZ/5\ZZ)$ produced by the diagonal and cyclic generators of \cref{prop:reverse}, so the inclusion $\Aut(V(K_1)) \subseteq \Aut(V(K_2))$ cannot be obtained by the present methods, and $\operatorname{PSL}_2(\mathbf{F}_{11})$ is not contained in $\PGP(V, \beta)$ either.  Whether $\Aut(X) = \Aut(V(K_1))$ holds in this case remains open.
\end{remark}

\bibliographystyle{alpha}
\bibliography{nfolds}

\end{document}